
\documentclass{amsart}

\theoremstyle{plain}
\newtheorem{Prop}{Proposition}[section]
\newtheorem{Thm}[Prop]{Theorem}
\newtheorem{Cor}[Prop]{Corollary}

\newtheorem{Lem}[Prop]{Lemma}

\theoremstyle{definition}
\newtheorem{Def}[Prop]{Definition}

\theoremstyle{remark}
\newtheorem{Rem}[Prop]{Remark}
\newtheorem{Ex}[Prop]{Example}
\newtheorem{Problem}[Prop]{\bf Problem}

\def\dim{\mathop{\roman{dim}}}
\def\int{\mathop{\roman{int}}}

\def\1{^{-1}}

\def\dim{\text{dim}}
\def\asdim{\text{asdim}}

\def\tf{{\tilde f}}
\def\tg{{\tilde g}}
\def\T2{{\mathbf T_2}}
\def\NN{{\mathbb N}}
\def\QQ{{\mathbb Q}}
\def\UU{{\mathcal U}}
\def\LL{{\mathcal L}}

\def\RR{{\mathbb R}}
\def\ZZ{{\mathbb Z}}
\def\Zmed{\ZZ(\frac{1}{2})}
\def\Ztwo{\ZZ(2^{\infty})}

\errorcontextlines=0 \numberwithin{equation}{section}

%



\begin{document}
\title[A coarse classification of countable abelian groups]{A coarse classification of countable abelian groups}
\author{J.~Higes}
\address{Departamento de Geometr\'{\i}a y Topolog\'{\i}a,
Facultad de CC.Matem\'aticas. Universidad Complutense de Madrid.
Madrid, 28040 Spain}
\email{josemhiges@yahoo.es}

\keywords{coarse geometry, countable abelian groups, asymptotic dimension, cohomological dimension}
\date{3 Marzo, 2008}

\subjclass[2000]{Primary 54F45; Secondary 55M10, 54C65}
\thanks{ The author is supported by Grant AP2004-2494 from the Ministerio de Educaci\' on y Ciencia, Spain and project MEC, MTM2006-0825. 
He thanks Kolya Brodskyi, Manuel Alonso Moron 
and Jesus Pascual Moreno Damas for helpful discussions. He also thanks Jose Manuel Rodriguez Sanjurjo for his support. Special thanks to Jerzy Dydak 
for all his help and nice suggestions.}




\maketitle

\begin{abstract}
We classify up to coarse equivalence all countable abelian groups of finite torsion free rank. The $\QQ$-cohomological 
dimension and the torsion free rank and  are the two invariants that give us such classification. We also prove that any countable abelian group of finite 
torsion free rank is coarsely equivalent to $\ZZ^n \oplus H$ where $H$ is a direct sum (possibly infinite) of cyclic groups. A partial generalization 
to countable abelian groups of the Gromov rigidity theorem for abelian groups is shown.     
\end{abstract}

\tableofcontents
\section{Introduction and preliminaries}

Given a finitely generated group $G$ with $L = \{g_1, g_2,..., g_n\}$ a symmetric system of generators of $G$ the norm $\|g\|_G$ of an element $g\in G$ is 
the minimum $k \in \NN$ such that $g =  g_{i_1}\cdot...\cdot g_{i_k}$ with $g_{i_m}\in L$. With this norm one can easily define a metric $d_G$ in $G$ by 
$d_G(g, h) = \|g^{-1}\cdot h\|_G$. Such metric is called a word metric. Gromov introduced in \cite{Gro asym invar} 
some invariants to study the geometry of finitely generated groups with word metrics. One important problem is to classify finitely generated groups 
quasi-isometrically. Notice that as finitely generated groups are quasi-geodesic then a coarse classification of finitely generated groups will imply a 
quasi-isometric classification. Some examples of this kind of results has been obtained for finitely presented groups of asymptotic dimension $1$ 
(\cite{Germitis}, \cite{Fuji}) and for abelian groups(\cite{Gromov2}). But the main problem remains open even for the class of meta-abelian groups
(see \cite{Harpe} for further details).\par
In last years there has been a lot of 
research (see \cite{Brod-Dydak-Higes-Mitra}, \cite{Dran-Smith}, \cite{Higes}, \cite{Sauer}, \cite{Shalom} and \cite{Smith} for example) 
trying to apply the ideas of Gromov to countable groups not necessarily finitely generated. For such groups it is clear that the notion of word metric need to be changed. 
To construct metrics in countable groups that satisfy similar properties as word metrics we use the concept of proper norm. 
A map $\|\cdot\|_G: G \to \RR_+$ is called to be a {\it proper norm} if  it satisfies
the following conditions:
\begin{enumerate}
\item $\|g\|_{G} = 0$ if and only if $g$ is the neutral element of $G$.
\item $\|g\|_G = \|g^{-1}\|_G$ for every $g \in G$.
\item $\|g\cdot h\|_G \le \|g\|_G +\|h\|_G$ for every $g, h \in G$.
\item For every $K>0$ the number of elements of $G$ such that $\|g\|_G\le K$ is finite. (proper condition)
\end{enumerate}
So if we build a proper norm $\|\cdot\|_G$ in $G$ then the map $d_G(g, h) = \|g^{-1}\cdot h\|_G$ defines in a group 
what is called a {\it proper left invariant metric}. 
Recall that a metric $d_G$ defined in a group $G$ is {\it left invariant} if $d_G(g\cdot h_1, g\cdot h_2) = d_G(h_1, h_2)$ for every $g, h_1, h_2 \in G$.
\par 
It is important to find 
(non finitely generated) countable versions of results about finitely generated groups. Notice that 
proper left invariant metrics appear also naturally when we take the restriction $d_G|_H$ of a word metric $d_G$ of a finitely generated group $G$
to some subgroup $H\subset G$. Hence the study of the coarse geometry of countable groups with proper left invariant metrics 
can give us information about the geometry of finitely generated groups with word metrics. One example of this fact is the paper of Dranishnikov and Smith 
(\cite{Dran-Smith}) about the asymptotic dimension of countable groups.\par
Another coarse invariants that has been studied in the context of countable groups are the asymptotic Assouad-Nagata dimension 
in \cite{Brod-Dydak-Higes-Mitra} and \cite{Higes} and the cohomological dimension in \cite{Shalom} and \cite{Sauer}. In this two last papers Shalom and Sauer shown that 
the cohomological dimension is a coarse invariant for certain types of countable groups. Given a ring $R$ the {\it cohomological dimension} 
of a group $G$ over $R$ ($cd_{R}(G)$) is the supremum of all the numbers $n$ such that there exists an $RG$-module $V$ with $H^n(G, V) \ne 0$. 
The following results will be important for us:
\begin{Thm} (Shalom-Sauer \cite{Shalom}, \cite{Sauer})\label{Shalom-Sauer} Let $R$ be a commutative ring and suppose there is an uniform embedding(coarse embedding) 
of $G$ into $H$ with $G$ and $H$ 
discrete countable groups. Then the following two statements hold:
\begin{enumerate}
\item If $cd_R(G)$ is finite then $cd_R(G) \le cd_R(H)$.
\item If $G$ is amenable and $\QQ \subset R$ then $cd_R(G) \le cd_R(H)$.
\end{enumerate}
Futhermore (1) and (2) hold true if the cohomological dimension is replaced by the homological dimension.
\end{Thm}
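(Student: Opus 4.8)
The plan is to reinterpret the (co)homological dimension of $G$ as a genuinely large-scale invariant of the metric space $(G,d_G)$ and then to show that this invariant can only grow under a coarse embedding. First I would record that a coarse embedding $f\colon G\to H$ is a coarse equivalence of $G$ onto the subspace $f(G)\subset H$: there are control functions $\rho_-,\rho_+\colon\RR_+\to\RR_+$ with $\rho_\pm(t)\to\infty$ and $\rho_-(d_G(x,y))\le d_H(f(x),f(y))\le\rho_+(d_G(x,y))$. Thus it suffices to prove two things, that the relevant invariant of $G$ agrees with that of $f(G)$ (a purely coarse statement), and that the invariant of a coarse subspace of $H$ is bounded above by that of $H$.

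For the first reduction I would detect the dimension using a coefficient module built from the coarse structure of $G$. For homology the canonical choice is a function space such as $\ell^\infty(G,R)$, for which $H_*(G,\ell^\infty(G,R))$ is Block--Weinberger uniformly finite homology and is manifestly insensitive to coarse equivalence; for cohomology one uses the dual product module. The crux of this step is that the stated hypotheses are exactly what force such a module to detect the top (co)homological degree: in case $(1)$ the finiteness of $cd_R(G)$ provides a finite-length projective resolution of the trivial module $R$ over $RG$, so that the top cohomology survives into the product coefficients; in case $(2)$, where no finiteness is available, amenability supplies a F{\o}lner sequence and, with $\QQ\subset R$ permitting division by the cardinalities of F{\o}lner sets, an averaging operator replacing the finite resolution. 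Here the argument splits into its two regimes, following Sauer for $(1)$ and Shalom for $(2)$.

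Next I would build the map induced by $f$. The control functions $\rho_\pm$ are precisely the data needed to pull back controlled (boundedly-supported-at-each-scale) cochains on $H$ to controlled cochains on $G$, and dually to push forward chains; this produces a morphism between the large-scale complexes computing the invariant above, compatible under the coarse inverse of $f$ on its image with the identification of the $G$-invariant and the $f(G)$-invariant. Everything then reduces to a single monotonicity statement: a nonzero top-degree class for $G$ must map to a nonzero class for $H$.

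This last nonvanishing step is the main obstacle, and it is where the hypotheses are consumed a second time. In case $(1)$ the finite cohomological dimension makes the top syzygy of $R$ projective, so a restriction--transfer comparison for the subspace inclusion $f(G)\subset H$ transports the top class without annihilating it. In case $(2)$ the invariant mean produces an honest retraction of the large-scale cohomology of $H$ onto that of $G$, and $\QQ\subset R$ guarantees that this retraction splits the map induced by $f$, so that nonvanishing is preserved. The closing assertion for homological dimension follows by dualizing throughout --- replacing the cohomological product module by uniformly finite homology, reversing every induced map, and invoking the same averaging in the amenable case. I expect the construction and nonvanishing of this induced top-degree map, controlling supports under $f$ and checking that the transfer or averaging is well defined and nonzero, to be the genuinely hard part.
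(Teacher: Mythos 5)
This statement is not proved in the paper at all: it is an imported result, quoted with attributions to Shalom and Sauer, and the author only ever \emph{uses} it (in Corollary \ref{CohomologicalDimensionAbelian} and Theorem \ref{MainTheorem}). So there is no in-paper argument to compare yours against; your proposal has to stand or fall on its own, measured against the actual proofs in \cite{Shalom} and \cite{Sauer}.

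As an outline your proposal points in the right direction --- reinterpret $cd_R$ via coefficient modules of $\ell^\infty$ type, reduce to monotonicity along the inclusion $f(G)\subset H$, and use finiteness of $cd_R(G)$ in case $(1)$ and F{\o}lner averaging in case $(2)$ --- but it is not a proof, because the two steps that carry all the content are only named, not established. First, the claim that $H^*(G,\ell^\infty(G,R))$ (or its homological dual) detects the top degree is nontrivial: the standard detection result (Bieri/Brown) says that $cd_R(G)=n<\infty$ forces $H^n(G,F)\neq 0$ for some \emph{free} module $F$, i.e.\ a direct sum of copies of $RG$, whereas $\ell^\infty(G,R)$ is a product-type module; passing from sums to products is exactly one of the technical hurdles Sauer has to clear, and your sketch asserts it rather than proving it. Second, the ``restriction--transfer comparison'' in case $(1)$ and the ``retraction splitting the map induced by $f$'' in case $(2)$ are precisely the theorems to be proved: constructing an induction functor along a uniform embedding that is exact, compatible with the module identifications above, and nonzero in top degree is the bulk of both source papers (Sauer via a dynamical/measured coupling built from the embedding, Shalom via induction of cohomology along a uniform measure equivalence). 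Your closing sentence concedes that this is ``the genuinely hard part,'' which is an accurate self-assessment: what you have is a correct road map of where the hypotheses enter, not an argument that they suffice. If you want to pursue this, the concrete next steps are (a) prove $H^n(G,\prod_I RG)\cong\prod_I H^n(G,RG)$-type statements under the finiteness hypothesis and deduce detection by $\ell^\infty(G,R)$, and (b) write down the induced map on the bar-resolution level with explicit control of supports by $\rho_\pm$, then verify the averaging operator is a chain map splitting it.
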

In view of this result a coarse classification of countable abelian groups could help us to determine their cohomological and homological dimensions 
with respect some rings. 
\par
Recall that a map $f: (X, d_X) \to (Y, d_Y)$ between metric spaces is said to be a {\it coarse map} if for every $\delta >0$ there exists and $\epsilon >0$ such that if $d_X(x, y) \le \delta$ 
then $d_Y(f(x), f(y)) \le \epsilon$. If moreover there exists a coarse map $g: (Y, d_Y) \to (X, d_X)$ ant two positive constants $K_1$ and  $K_2$ such that for every 
$x \in X$ and $y \in Y$ we gave $d_X(g(f(x)), x) \le K_1$ and $d_Y(f(g(y)), y)\le K_2)$ then it is said that $f$ is a {\it coarse equivalence} and $g$ is a 
{\it coarse inverse} of $f$.
If there exists a coarse equivalence between two metric spaces it is said that both spaces are {\it coarsely equivalent}. If a map $f: (X, d_X) \to (Y, d_Y)$ defines a 
coarse equivalence between $X$ and some subspace of $Y$ it is said that $f$ is a {\it coarse embedding}. For a study of coarse map in a more general way check \cite{Roe}.   
\par
The coarse structure of finitely generated abelian groups is well known. Firstable is not hard to see that a finitely generated abelian group $G$ is coarsely equivalent to $\ZZ^n$ with 
$n$ its torsion free rank. A second remarkable (but not so easy) result is the following quasi-isometric rigidity of $\ZZ^n$ (see \cite{Gromov2} and \cite{Shalom}): 
\begin{Thm} (Gromov)\label{QuasiIsometryRigidity} If a group $G$ is quasi-isometric to $\ZZ^n$ then it has a finite index subgroup isomorphic to $\ZZ^n$. 
\end{Thm}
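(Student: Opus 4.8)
The plan is to reduce the statement to the structure theory of groups of polynomial growth. I first note that the hypothesis presupposes that $G$ is finitely generated with a word metric (so that being quasi-isometric to $\ZZ^n$ makes sense). The starting observation is that having polynomial growth of a given degree is a quasi-isometry invariant: a quasi-isometry distorts the cardinality of balls only by multiplicative and additive constants, so it preserves the degree of the growth polynomial. Since $\ZZ^n$ has polynomial growth of degree exactly $n$, the hypothesis makes $G$ a finitely generated group of polynomial growth of degree $n$.

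Next I would apply Gromov's polynomial growth theorem to conclude that $G$ is virtually nilpotent. Passing to a finite index subgroup, and using that finitely generated nilpotent groups are virtually torsion free, I may assume that $G$ contains a finite index torsion free nilpotent subgroup $N$; note that $N$ is itself quasi-isometric to $\ZZ^n$. It then suffices to prove that $N\cong\ZZ^n$, i.e. that $N$ is abelian of rank $n$.

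The heart of the argument is to pin down two invariants of $N$ to the value $n$. On one hand, by the quasi-isometry invariance of growth the growth degree of $N$ equals $n$; by the Bass--Guivarc'h formula this degree is $\sum_{i\ge 1} i\cdot b_i$, where $b_i=\operatorname{rank}\big(\gamma_i(N)/\gamma_{i+1}(N)\big)$ and $\gamma_i$ denotes the lower central series. On the other hand, by Pansu's theorem the asymptotic cone of $N$ is the associated graded Carnot Lie group with a Carnot--Carath\'eodory metric, whose topological dimension is the Hirsch length $h(N)=\sum_{i\ge 1} b_i$; since the asymptotic cone is a bi-Lipschitz, hence topological, invariant and the cone of $\ZZ^n$ is $\RR^n$, this forces $h(N)=n$. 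Comparing, $\sum_i i\,b_i=\sum_i b_i=n$ with all $b_i\ge 0$ forces $b_i=0$ for $i\ge 2$, so $N$ is abelian; being torsion free of rank $n$ it is isomorphic to $\ZZ^n$. Hence $G$ has a finite index subgroup isomorphic to $\ZZ^n$.

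I expect the main obstacle to be everything feeding the middle step: Gromov's polynomial growth theorem is a genuinely deep input, and the identification of the asymptotic cone of a nilpotent group with its graded Lie group (Pansu's differentiation theory), together with the fact that the dimension of the cone equals the Hirsch length, carries the essential geometric content. The remaining bookkeeping with the Bass--Guivarc'h formula is routine by comparison, so the real difficulty lies in the reduction from \emph{virtually nilpotent} to \emph{virtually abelian}.
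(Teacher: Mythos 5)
The paper does not actually prove this statement: it is quoted as a known theorem of Gromov (with references to \cite{Gromov2} and \cite{Shalom}) and used only as background for the rigidity discussion in Section 6, so there is no internal proof to compare against. Your outline is the standard modern proof and is correct modulo the three external inputs you name (quasi-isometry invariance of growth, Gromov's polynomial growth theorem, and Pansu's description of the asymptotic cone, together with the Bass--Guivarc'h formula). Two small points are worth making explicit. First, from $\sum_i i\,b_i=\sum_i b_i=n$ you only get that $\gamma_2(N)$ has Hirsch length zero, i.e.\ is a finite subgroup of $N$; it is the torsion-freeness of $N$ that then forces $\gamma_2(N)=1$, so the earlier reduction to a torsion-free finite-index subgroup is not cosmetic and the final inference should say so. Second, the Pansu step can be replaced by inputs already present in this paper: a finitely generated torsion-free nilpotent group $N$ is poly-$\ZZ$, so $cd_{\QQ}(N)=h(N)$, and Theorem \ref{Shalom-Sauer} makes $cd_{\QQ}$ a quasi-isometry invariant among amenable groups, whence $h(N)=cd_{\QQ}(\ZZ^n)=n$ without any asymptotic cones. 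Either way, the essential difficulty is, as you correctly identify, concentrated in Gromov's polynomial growth theorem plus one further quasi-isometry invariant that separates the Hirsch length from the growth degree.
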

In other words a finitely generated group is coarsely equivalent to an abelian group if and only if it is virtually abelian. Recall that a group $G$ is said to satisfy 
a property $\mathcal P$ virtually if there exists a subgroup $H \subset G$ of finite index that satisfies $\mathcal P$. \par
The main targets of this paper is to 
study countable version of this two results about abelian groups. In particular we will show that any countable(non finitely generated) 
abelian group $G$ of finite torsion free rank is coarsely equivalent to $\ZZ^n \oplus M$ with $M$ a locally finite group of the form $\bigoplus_{i = 1}^{\infty} \ZZ_{p_i}$(Theorem \ref{SuperTheorem}  and Theorem 
\ref{MainTheorem}). Also we will prove that 
if a countable group $G$ is coarsely equivalent to an abelian group of torsion free rank finite then it is locally virtually abelian(Theorem \ref{Rigidity}). A countable group 
is said to satisfy a property $\mathcal P$ {\it locally} if all its finitely generated subgroups satisfy $\mathcal P$. It is unknown 
if the converse of this fact is also true(Problem \ref{RigidityProblem}). \par
To prove the main results we study the coarse splitting (see definition \ref{CoarselySplit}) of exact sequences. In particular we 
are interested in determining which properties has to satisfy an exact sequence $1 \to H \to G \to G/H\to 1$ to guarantee that $G$ is coarsely equivalent to 
$H\oplus G/H$. Notice that in general it is not always true even for finitely presented groups, otherwise nilpotent groups would be coarsely abelian what is clearly false. \par
In Section \ref{Growth} we will study the growth function in countable groups. In Section \ref{Torsion} we will discuss the abelian torsion groups.
Sections \ref{Odd} and \ref{Even} are devoted to the study of the odd part and the even part 
of a countable abelian group and the coarse splitting theorems mentioned early. Finally in section \ref{MainSection} we state the main results of this paper.

\section{Large scale connectedness and growth of countable groups}\label{Growth}
In this section we will show that large scale connectedness is useful to determine if a group is finitely generated or not. As a plus, this will 
leads us to show that the growth of countable groups deppends uniquely of the growth of its finitely generated subgroups. \par
We will define the growth of a countable group using the idea of $s$-scale chain. 
Let $s$ be a positive real number. An {\it $s$-scale chain} of length $m$ (or $s$-path)
 between two points $x$ and $y$ of a metric space $(X, d_X)$ is defined as
a finite sequence points
$\{x= x_0, x_1, ..., x_m = y\}$ such that
$d_X(x_i, x_{i+1}) < s$ for every $i = 0, ..., m-1$.
A subset $S$ of a metric space $(X, d_X)$ is said to be {\it $s$-scale connected} if there exists an $s$-scale chain
contained in $S$ for every two elements of $S$. A metric space $(X, d_X)$ is said to be {\it large scale connected} if it is 
$s$-scale connected for some $s > 0$. It is not difficult to see that large scale connectedness is invariant under coarse equivalences. \par
One important aspect about countable groups with proper left invariant metric 
is that the $s$-scale connected components corresponds with finitely generated subgroups. This is shown in next lemma.
\begin{Lem}\label{ComponentsInCountableGroups}
Let $G$ be a countable group with $d_G$ a proper left invariant metric. Then for every $s >0$ the $s$-scale connected component $H$ of $G$ at the origin $1_G$ is a finitely 
generated subgroup of $G$. 
\end{Lem}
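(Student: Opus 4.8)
The plan is to identify the component $H$ explicitly with the subgroup generated by the finite metric ball of radius $s$ at the origin, after which both assertions become immediate. Set $B_s = \{g \in G : \|g\|_G < s\}$. I would first record two elementary features of this set. By the proper condition (4), $\{g : \|g\|_G \le s\}$ is finite, hence so is its subset $B_s$. By condition (2), $\|g^{-1}\|_G = \|g\|_G$, so $B_s$ is symmetric, i.e. $B_s = B_s^{-1}$. Thus $\la B_s\ra$ is a finitely generated subgroup of $G$, and the whole task reduces to proving $H = \la B_s\ra$.

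The core step is this identification, and it rests entirely on left invariance. For the inclusion $H \subseteq \la B_s\ra$, suppose $g \in H$, so there is an $s$-scale chain $1_G = x_0, x_1, \dots, x_m = g$ with $d_G(x_i, x_{i+1}) < s$. Left invariance gives $\|x_i^{-1}x_{i+1}\|_G = d_G(x_i, x_{i+1}) < s$, so each element $a_i := x_i^{-1}x_{i+1}$ lies in $B_s$. Telescoping the partial products and using $x_0 = 1_G$ yields $g = a_0 a_1 \cdots a_{m-1}$, a word in $B_s$, hence $g \in \la B_s\ra$. For the reverse inclusion, take $g \in \la B_s\ra$; since $B_s$ is symmetric, write $g = b_1 b_2 \cdots b_k$ with every $b_i \in B_s$. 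Define partial products $x_0 = 1_G$ and $x_j = b_1 \cdots b_j$, so that $x_k = g$. Then $d_G(x_j, x_{j+1}) = \|x_j^{-1}x_{j+1}\|_G = \|b_{j+1}\|_G < s$, so $x_0, x_1, \dots, x_k$ is an $s$-scale chain from $1_G$ to $g$, giving $g \in H$.

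Combining the two inclusions proves $H = \la B_s\ra$, and since $B_s$ is finite this is a finitely generated subgroup, as required. The proof is short and its only genuinely load-bearing point is the use of left invariance to pass back and forth between $s$-scale chains and words in the ball $B_s$; once one sees that a single $s$-step $x_i \to x_{i+1}$ corresponds exactly to left multiplication by the element $x_i^{-1}x_{i+1}$ of norm less than $s$, the dictionary between chains and words is forced, and the properness axiom supplies finiteness for free. Accordingly, I do not expect any serious obstacle here beyond being careful that $B_s$ is symmetric so that words in $B_s$ need no inverses, and that the strict inequality $\|g\|_G < s$ in the definition of $B_s$ matches the strict inequality in the definition of an $s$-scale chain.
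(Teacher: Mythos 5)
Your proof is correct and follows essentially the same route as the paper: identify the $s$-scale component at $1_G$ with the subgroup generated by the finite symmetric ball of norm at most $s$, using left invariance to translate chain steps into generators. You are in fact slightly more complete than the paper's own argument, which only writes out the inclusion $H \subseteq \la B_s\ra$ and leaves the reverse inclusion (needed to see that $H$ is actually a subgroup) implicit.
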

\begin{proof}
Suppose $s>0$ given and let $H$ be the $s$-scale connected component at $1_G$. 
As $d_G$ is proper(and discrete) the set $A = \{a_i | a_i \in G \text{ and } \|a_i\|_d \le s\}$ is finite. We will show that 
$H$ is the subgroup generated by $A$.  
Let $g\in H$ then there is a sequence $1= g_0, g_1, ..., g_n = g$ such that $d(g_i, g_{i+1}) \le s$. Hence $\|g_i^{-1}\cdot g_{i+1}\| \le s$ and
$g_i^{-1}\cdot g_{i+1}\in A$.  As $g = g_0^{-1}\cdot g_1 \cdot g_1^{-1}\cdot g_2\cdot...\cdot g_{n-1}^{-1}\cdot g_n$ we have that all elements of 
$H$ are generated by $A$.
\end{proof}

\begin{Cor}\label{FinitelyGeneratedAreConnected}
A countable group $G$ is finitely generated if and only if it is large scale connected for some (and hence for all) proper left invariant metrics $d_G$.
\end{Cor}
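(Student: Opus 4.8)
The plan is to deduce both implications from Lemma \ref{ComponentsInCountableGroups}, handling the parenthetical ``some (and hence for all)'' claim along the way.

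For the forward implication I would prove the stronger statement that a finitely generated $G$ is large scale connected for \emph{every} proper left invariant metric $d_G$; this simultaneously establishes the ``and hence for all'' remark. Fix a finite symmetric generating set $L = \{g_1, \ldots, g_k\}$ and put $s = 1 + \max_i \|g_i\|_G$, which is a finite real number. Given $g \in G$, write $g = g_{i_1} \cdots g_{i_m}$ as a product of elements of $L$. By left invariance, consecutive terms of the sequence of partial products $1_G, g_{i_1}, g_{i_1}g_{i_2}, \ldots, g$ are at distance $\|g_{i_{j+1}}\|_G < s$, so they form an $s$-scale chain from $1_G$ to $g$. Thus every element lies in the $s$-scale connected component of $1_G$, and left invariance translates any such chain so as to connect an arbitrary pair of points; hence $G$ is $s$-scale connected, i.e.\ large scale connected.

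For the reverse implication, suppose $G$ is large scale connected for some proper left invariant metric $d_G$, so that it is $s$-scale connected for some $s > 0$. Then the $s$-scale connected component of $1_G$ is all of $G$, and by Lemma \ref{ComponentsInCountableGroups} this component is a finitely generated subgroup; hence $G$ is finitely generated. Combining the two directions gives the equivalence, and since the forward direction was established for every proper left invariant metric, large scale connectedness with respect to one such metric is equivalent to large scale connectedness with respect to all of them.

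I do not expect a genuine obstacle here, as the result is an essentially formal consequence of the lemma. The only point requiring care is the bookkeeping in the forward direction: verifying that left invariance converts a word in the generators into a bona fide $s$-scale chain, and that the same translation argument upgrades ``connected to $1_G$'' into full $s$-scale connectedness of $G$.
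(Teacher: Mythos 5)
Your proof is correct, and the reverse implication (large scale connected $\Rightarrow$ finitely generated, via Lemma \ref{ComponentsInCountableGroups}) is exactly the paper's argument. The forward implication, however, is handled differently. The paper takes the word metric, observes that $(G,d_G)$ is large scale connected for that metric, and then invokes two external facts: that large scale connectedness is a coarse invariant and that all proper left invariant metrics on a countable group are coarsely equivalent (citing \cite{Smith}); the ``and hence for all'' clause is thus obtained by transport along a coarse equivalence. You instead prove the ``for all'' statement directly: for an arbitrary proper left invariant metric you set $s = 1 + \max_i \|g_i\|_G$ over a finite symmetric generating set and check that the partial products of a word form an $s$-scale chain, with left invariance translating chains based at $1_G$ to arbitrary pairs. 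Your route is more elementary and self-contained --- it needs nothing beyond the definitions and avoids the (nontrivial) coarse equivalence of all proper left invariant metrics --- while the paper's route is shorter on the page and reuses machinery it has already put in place. Both correctly establish the parenthetical claim.
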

\begin{proof}
If $G$ is a finitely generated group then its word metric $d_G$ is a proper left invariant metric and $(G, d_G)$ is large-scale connected. It is known that 
large-scale connectedness is a coarse property and all proper left invariant metrics are coarsely equivalent (see \cite{Smith}). So we get one implication.
For the converse suppose $(G, d_G)$ is  a countable group with $d_G$ a proper left invariant metric. 
Suppose $d_G$ is $s$-scale connected. By lemma \ref{ComponentsInCountableGroups} the $s$-scale connected component 
of $G$(and hence the whole group) is a finitely generated subgroup $H$ of $G$.  
\end{proof}
In view of lemma \ref{ComponentsInCountableGroups} we can expect that properties related with the $s$-scale components in a countable group can be estimated 
just viewing the behaviour of its finitely generated subgroups. One of such properties is the asymptotic dimension.

\begin{Def} A metric space $(X, d_X)$ is said to be of
{\it asymptotic dimension} at most $n$ (notation $\asdim(X, d) \le
n$) if there is an increasing function $D_X: \RR_+ \to \RR_+$ such
that for all $s> 0$ there is a cover $\UU =\{\UU_0, ...,\UU_n\}$
so that the $s$-scale connected components of each $\UU_i$ are
$D_X(s)$-bounded i.e. the diameter of such components is bounded
by $D_X(s)$.
\end{Def}
Next theorem shows the mentioned estimation: 
\begin{Thm}(Dranishnikov and Smith \cite{Dran-Smith})\label{AsymptoticDimensionCountable} Let $G$ be a countable group then: 
\[\asdim (G )= sup\{asdim( F)\}\]
where the supremum varies over all finitely generated subgroups $F$ of $G$.
\end{Thm}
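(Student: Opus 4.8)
The plan is to prove the two inequalities separately, the nontrivial content being the bound $\asdim(G) \le \sup_F \asdim(F)$.

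For the easy inequality $\asdim(G) \ge \sup_F \asdim(F)$, I would observe that each finitely generated subgroup $F$, equipped with the restriction $d_G|_F$, is genuinely a subspace of $(G, d_G)$; since $d_G|_F$ is again a proper left invariant metric it is coarsely equivalent to the word metric of $F$, so its asymptotic dimension agrees with $\asdim(F)$. It then suffices to note that asymptotic dimension is monotone under passing to subspaces: if $\{\UU_0, \dots, \UU_n\}$ witnesses $\asdim(G) \le n$ with control function $D_G$, then $\{\UU_0 \cap F, \dots, \UU_n \cap F\}$ covers $F$, and the $s$-scale components of $\UU_i \cap F$, being $s$-scale connected inside $\UU_i$, are contained in $s$-scale components of $\UU_i$ and hence $D_G(s)$-bounded. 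Thus $\asdim(F) \le \asdim(G)$ for every $F$.

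For the main inequality, let $n = \sup_F \asdim(F)$ and assume it is finite (otherwise there is nothing to prove). Fix $s > 0$ and let $H_s$ denote the $s$-scale connected component of $G$ at $1_G$. By Lemma \ref{ComponentsInCountableGroups} the group $H_s$ is finitely generated, so $\asdim(H_s) \le n$. The crucial geometric observation is that, because $d_G$ is left invariant, two elements $g, g'$ lie in the same $s$-scale component of $G$ if and only if $g^{-1} g' \in H_s$; hence the $s$-scale components of $G$ are exactly the left cosets $g H_s$. I would then take a cover $\{\VV_0, \dots, \VV_n\}$ of $H_s$ realizing $\asdim(H_s) \le n$ at scale $s$, so that the $s$-scale components of each $\VV_i$ are $D_{H_s}(s)$-bounded, choose a set $T$ of left coset representatives, and set $\UU_i = \bigcup_{t \in T} t \VV_i$. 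These $n+1$ sets cover $G$, and since every $s$-scale component of $\UU_i$ is trapped inside a single coset $t H_s$ it coincides with $t$ times an $s$-scale component of $\VV_i$; by left invariance it is therefore $D_{H_s}(s)$-bounded.

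The hard part is to upgrade these scale-by-scale bounds into a single increasing control function $D_G$, as the definition of asymptotic dimension requires. Setting $D_G(s) = D_{H_s}(s)$ gives the required bound at each fixed scale, and the key to making it increasing and finite on bounded intervals is the nesting $H_{s'} \subseteq H_{s}$ for $s' \le s$: the optimal cover of the larger component $H_s$ restricts to a cover of $H_{s'}$ whose $s'$-scale components are $D_{H_s}(s')$-bounded, so all the relevant bounds for $s' \le s$ are dominated by the increasing quantity $D_{H_s}(s)$. Replacing $D_G$ by a suitable increasing majorant of $s \mapsto D_{H_s}(s)$ then yields, for every $s$, a cover of $G$ by $n+1$ sets with uniformly controlled $s$-scale components, that is, $\asdim(G) \le n$.
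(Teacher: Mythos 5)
Your proof is correct. Note first that the paper itself offers no proof of this statement: it is quoted verbatim from Dranishnikov--Smith \cite{Dran-Smith} and used as a black box, so there is no internal argument to compare yours against. Your argument is a complete and self-contained proof along the standard lines, and it is essentially the one in the cited reference: the decisive observation that the $s$-scale components of $(G,d_G)$ are precisely the left cosets of the finitely generated subgroup $H_s$ (Lemma \ref{ComponentsInCountableGroups}), so that a good cover of $H_s$ translates coset by coset to a good cover of $G$ at scale $s$. Both halves check out: the monotonicity argument for $\asdim(F)\le\asdim(G)$ is fine once you invoke, as you do, that the restricted metric $d_G|_F$ is a proper left invariant metric and hence coarsely equivalent to the word metric of $F$; and the disjointness of the cosets guarantees that $\UU_i\cap tH_s=t\VV_i$, so each $s$-component of $\UU_i$ really is a translate of an $s$-component of $\VV_i$. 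Your worry about the control function is legitimate given the paper's phrasing of the definition, but it dissolves even more easily than you suggest: for $s'\le s$ every $s'$-scale component is contained in an $s$-scale component, so the optimal bound $B(s)$ at scale $s$ is automatically non-decreasing in $s$, and one may simply take $D_G=B$; your nesting argument $H_{s'}\subseteq H_s$ is a correct (if slightly roundabout) way of seeing the same finiteness.
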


The definition of growth we will use here 
is just the application of the ideas of \cite{Roe} about the growth of general coarse structures to the particular case of countable abelian groups. 
Recall firstly that 
given two functions $g, f: \RR_+ \to \RR_+$ it is said that {\it $g$ is of type at most $f$} 
(notation: $g \preceq f$) if there exists a constant $C>0$ such that $g(x) \le C \cdot f(C\cdot x)$ with $x$ sufficiently large. If $g \preceq f$ and $f\preceq g$ it is said that 
$f$ and $g$ are of the same type.

\begin{Def}
Let $(G, d_G)$ be a countable abelian group with a proper left invariant metric, let $g\in G$, $s\in \RR_+$ and $n\in \NN$.  
We will say that a subset $A_g^{(n, s)}$ of $G$ is  
the $(n,s)$-connected component at $g$ if for every element  $h \in A_g^{(n,s)}$ there exists and $s$-scale chain of lenght $n$ that connects $g$ with $h$.\par
The {\it growth function} of $s$ at $g\in G$ is the function:
\[ gr_{(s, g)}: n \to \sharp(A_g^{(n, s)}).\]
Where $\sharp(A_g^{(n, s)})$ denotes de the cardinal of $A_g^{(n, s)}$.\par
Let $f: \NN \to \NN$ be a function. We will say that a countable group $(G, d_G)$ is of {\it growth type} at most $f$ if for every $g$ and $s$ as above 
$gr_{(s, g)} \preceq f$. Moreover 
if there exists an $g \in G$ and an $s \in \RR_+$ such that $f \preceq gr_{(s, g)}$ then we will say that the growth type of $G$ is $f$.  
\end{Def}
We need to show that in the case of finitely generated groups with word metrics then this definition coincides with the classical definition of growth 
i.e. the classical growth of a finitely generated group $(G, d_G)$ with $d_G$ a word metric is the type of the function $f: n \to \sharp(B(1_G, n))$. We will call 
this growth temporaliry classical-growth.  
\begin{Prop}\label{GeneralPropertiesGrowth}
Let $f : \NN \to \NN$ be a function, let $(G, d_{G})$ be a countable group of growth type at most $f$ and let $(H, d_H)$ be another countable group. Then:
\begin{enumerate}
\item Every subset $A$ of  $G$ is of growth type at most $f$.
\item If there exists a coarse embedding $g: H \to G$ then $H$ is of growth type at most $f$.
\item If $G$ is a finitely generated group then $G$ is of classical-growth at most $f$. 
Moreover the growth type exists for $G$ and coincides with 
the classical-growth. 
\end{enumerate}
\end{Prop}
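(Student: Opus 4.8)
The plan is to handle the three parts in order of increasing difficulty, exploiting throughout that the $(n,s)$-connected component and the growth function are defined purely metrically, so that ``growth type at most $f$'' makes sense for any countable metric space (in particular for a subset $A \subseteq G$ with the restricted metric).

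For \textbf{(1)} I would argue by monotonicity. Fix $a \in A$ and $s > 0$. Any $s$-scale chain of length $n$ lying in $A$ is also an $s$-scale chain in $G$, so the component $A_a^{(n,s)}$ computed inside $A$ is contained in the one computed inside $G$. Hence $gr_{(s,a)}^{A}(n) \le gr_{(s,a)}^{G}(n)$, and since the right-hand side is $\preceq f$ for every base point (in particular for $a \in A \subseteq G$) and every $s$, the same bound passes to $A$.

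For \textbf{(2)} I would use two estimates. First, since $g$ is a coarse embedding it is a coarse equivalence onto its image, hence has a coarse inverse $\psi$ with $d_H(\psi(g(h)),h) \le K_1$. \emph{Pushforward:} because $g$ is a coarse map there is, for each $s$, an $\epsilon$ with $d_H(x,y) < s \Rightarrow d_G(g(x),g(y)) \le \epsilon$; pushing an $s$-chain in $H$ forward yields an $s'$-chain in $G$ with $s' = \epsilon + 1$, so $g\bigl(A_{h_0}^{(n,s)}\bigr) \subseteq A_{g(h_0)}^{(n,s')}$. \emph{Bounded multiplicity:} if $g(h)=g(h')$ then $\psi$ collapses them too, so $d_H(h,h') \le 2K_1$; thus every fiber of $g$ has diameter $\le 2K_1$, and since $d_H$ is proper and left invariant all balls of radius $2K_1$ have the same finite cardinality $N$, making $g$ at most $N$-to-one. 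Combining these, $gr_{(s,h_0)}^{H}(n) = \sharp A_{h_0}^{(n,s)} \le N \cdot \sharp\, g\bigl(A_{h_0}^{(n,s)}\bigr) \le N \cdot gr_{(s',g(h_0))}^{G}(n) \preceq f$.

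For \textbf{(3)} I would work with the word metric and write $\beta(n) = \sharp B(1_G,n)$ for the classical growth. Choosing $s$ with $1 < s \le 2$ makes $d_G(x,y) < s$ equivalent to $y=x$ or $y = xg_i$ for a generator, so an $s$-chain of length $n$ from $1_G$ reaches exactly $B(1_G,n)$, giving $gr_{(s,1_G)} = \beta$. Left invariance yields $A_g^{(n,s)} = g\cdot A_{1_G}^{(n,s)}$, so every growth function is base-point independent. For arbitrary $s$ the step set $A_s = \{a : \|a\|_G < s\}$ is finite, and each $s$-chain of length $n$ produces an element of norm $< ns$, so $A_{1_G}^{(n,s)} \subseteq B(1_G, \lceil s\rceil n)$ and $gr_{(s,g)}(n) \le \beta(\lceil s\rceil n)$, i.e. $gr_{(s,g)} \preceq \beta$. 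Since also $\beta = gr_{(2,1_G)}$, the growth type of $G$ exists and equals $\beta$; the hypothesis then forces $\beta \preceq f$, which is exactly classical growth at most $f$. The main obstacle is the bounded-multiplicity estimate in part (2): without controlling the non-injectivity of the coarse embedding one cannot transfer the cardinality bound, and this is precisely where properness together with left invariance of $d_H$ are indispensable, whereas parts (1) and (3) are routine once the metric description of the $(n,s)$-components is available.
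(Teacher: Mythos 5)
Your proposal is correct, and while parts (1) and (3) run essentially along the paper's lines, your part (2) takes a genuinely different route. The paper reduces to the injective case by restricting $g$ to a coarsely dense subset $F\subseteq H$ on which it is injective and then comparing $(n,s)$-components directly; this silently relies on the fact that a growth bound on a coarsely dense subset passes back up to all of $H$, which is the reverse direction of (1) and itself needs a bounded-multiplicity estimate that the paper never writes down. You instead keep the whole map and bound its multiplicity directly: the coarse inverse forces every fiber of $g$ into a ball of radius $2K_1$, and properness plus left invariance give all such balls a common finite cardinality $N$, so $g$ is at most $N$-to-one and $\sharp A_{h_0}^{(n,s)}\le N\cdot gr_{(s',g(h_0))}(n)$. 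This is more self-contained and justifies exactly the point the paper glosses over. One caveat on (3): your computation is carried out entirely in the word metric, but the hypothesis concerns an arbitrary proper left-invariant metric $d_G$, so the final step ``the hypothesis forces $\beta\preceq f$'' needs the bridge the paper states explicitly, namely that any two proper left-invariant metrics on a countable group are coarsely equivalent via the identity, after which part (2) transfers the bound between the two metrics; you should add that one line. (Both you and the paper also tacitly use that $\preceq$ behaves well under multiplying by constants and rescaling the argument, which is harmless here since $gr_{(s,g)}$ and $\beta$ are non-decreasing.)
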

\begin{proof}
(1) is obvious.\par
To prove (2) it is enough to prove when $g$ is inyective. The reason is that for any coarse embedding $g: H\to G$ there exists a subset $F$ of $H$ such that 
$g|_F: F \to G$ is a coarse inyective embedding and every point of $H$ is at bounded distance of $F$. Let $gr_{s, h}$ be the growth function of $s$ at $h \in H$. 
As $g$ is coarse there exists a $\epsilon_s$ such that if $d_H(h_1, h_2) \le s$ then $d_G(g(h_1), g(h_2))\le \epsilon_s$. 
Hence $\sharp(A_h^{(n,s)}) \le \sharp(A_{g(h)}^{(n, \epsilon_s)})$ what implies $gr_{s, h} \preceq gr_{\epsilon_s, g(h)} \preceq f$. \par
To prove (3) notice that $(G, d_G)$ is coarsely equivalent to $(G, d)$ with $d$ some word metric (see \cite{Smith}) by (2) the growth of $(G, d)$ is at most $f$ 
but as $(G, d)$ is quasi-geodesic then our defnition of growth coicides with the definition of classical-growth. To see it we only need that if $d(g_1, g_2) < n\cdot s$ with $g_1$ 
and $g_2$ any two elements of $G$ then 
 there exists an $s$-scale chain of length $n$. This is obvious by the definition of word metric. 
\end{proof}

\begin{Thm}\label{GrowthOfSubgroups}
Let $(G, d_G)$ a countable group with a proper left invariant metric $d_G$ and let $f: \NN \to \NN$ be a function. In this situation if the growth type of each finitely generated 
subgroup $H$ of $G$ is at most $f$ then the growth type of $G$ is at most $f$. 
\end{Thm}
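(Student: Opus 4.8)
The plan is to reduce the computation of the growth of $G$ to that of a single finitely generated subgroup, exploiting left invariance to move all basepoints to the origin and Lemma \ref{ComponentsInCountableGroups} to identify the relevant subgroup. First I would observe that, because $d_G$ is left invariant, the growth function does not depend on the basepoint: for fixed $g \in G$ and $s > 0$ the translation $x \mapsto g^{-1}\cdot x$ is an isometry of $(G, d_G)$ sending $g$ to $1_G$ and carrying every $s$-scale chain of length $n$ from $g$ to an $s$-scale chain of the same length from $1_G$. This yields a bijection $A_g^{(n,s)} \to A_{1_G}^{(n,s)}$, so that $gr_{(s,g)} = gr_{(s,1_G)}$, and it suffices to prove $gr_{(s,1_G)} \preceq f$ for every $s > 0$.

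Next, fix $s > 0$ and let $H$ be the $s$-scale connected component of $G$ at $1_G$. By Lemma \ref{ComponentsInCountableGroups}, $H$ is a finitely generated subgroup. The key geometric observation is that the set $A_{1_G}^{(n,s)}$, together with the chains witnessing it, stays inside $H$: if $1_G = g_0, g_1, \dots, g_n = h$ is an $s$-scale chain, then each partial chain $1_G = g_0, \dots, g_i$ shows that $g_i$ lies in the $s$-scale connected component of $1_G$, i.e. $g_i \in H$. Consequently the $(n,s)$-connected component at $1_G$ computed in $G$ coincides with the one computed in the subgroup $H$ equipped with the restricted metric $d_G|_H$, and therefore the two growth functions agree: $gr_{(s,1_G)}^{G} = gr_{(s,1_G)}^{H}$ for all $n$.

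Finally I would invoke the hypothesis. Since $d_G|_H$ is a proper left invariant metric on the finitely generated group $H$, and since all such metrics are coarsely equivalent (\cite{Smith}) with growth type a coarse invariant by Proposition \ref{GeneralPropertiesGrowth}(2), the assumption that the growth type of $H$ is at most $f$ applies to this metric; evaluating at the basepoint $1_G$ and scale $s$ gives $gr_{(s,1_G)}^{H} \preceq f$. Combining this with the previous step yields $gr_{(s,1_G)}^{G} \preceq f$, and since $s > 0$ was arbitrary and the basepoint was already reduced to $1_G$, this shows that $G$ has growth type at most $f$. The one point requiring care -- the main, if modest, obstacle -- is the identification $gr_{(s,1_G)}^{G} = gr_{(s,1_G)}^{H}$: I must check that no $s$-scale chain issuing from $1_G$ can leave $H$, which is precisely guaranteed by the definition of $H$ as a full $s$-scale connected component, and that the hypothesis is genuinely insensitive to which proper left invariant metric is placed on $H$.
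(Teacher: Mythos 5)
Your proposal is correct and follows essentially the same route as the paper: reduce to the basepoint $1_G$ by left invariance, observe that every $s$-scale chain from $1_G$ stays in the $s$-scale connected component $H$, identify $H$ as a finitely generated subgroup via Lemma \ref{ComponentsInCountableGroups}, and bound $gr_{(s,1_G)}$ by the growth of $H$. Your extra care about which proper left invariant metric on $H$ the hypothesis refers to is a reasonable elaboration of a point the paper leaves implicit, but it does not change the argument.
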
  
\begin{proof}
Fix $x\in G$ and $s \in \RR_+$. Notice that as $d_G$ is left invariant then $gr_{(s, x)} = gr_{(s, 1_G)}$ and the function $gr_{(s, 1_G)}$ coincides with its restriction to the
$s$-scale connected component at $1_G$. By lemma \ref{ComponentsInCountableGroups} such component coincides with the subgroup $H$ of $G$ finitely generated by the
 set $A = \{ g| \|g\|_G \le s\}$. Therefore $gr_{(s, 1_G)}$ will be bounded by the growth type of $H$.  
\end{proof}

\begin{Ex}
Let $G = \bigoplus_{i=1}^{\infty}\ZZ$ with some proper left invariant metric $d_G$ defined. Then  
for any subexponential function $f: \NN \to \NN$ the growth type of $G$ is at most $f$. This is clear as the growth type of each finitely generated subgroup is polynomial. But 
for any polynomial function $p$ there is a finitely generated subgroup $H$ of $G$ such that the growth type $q$ of $H$ is not bounded by $p$. Hence there is no function $f$ such that 
$G$ is of growth type $f$.  
\end{Ex}

\begin{Cor} A countable group $G$ is locally finite if and only if $G$ is of flat growth i.e. a constant function is a growth function.
\end{Cor}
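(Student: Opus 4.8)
The plan is to prove the two implications separately, exploiting the reduction of growth to finitely generated subgroups developed above. The key preliminary observation is that \emph{flat growth} coincides with \emph{growth type at most a constant function}: since $g \preceq f$ only requires $g(x) \le C\cdot f(C\cdot x)$ for large $x$, any two positive constant functions are of the same type, and because every growth function satisfies $gr_{(s,g)}(n) \ge 1$ (the component $A_g^{(n,s)}$ always contains $g$), having growth type at most a constant is equivalent to all its growth functions being bounded. I will use $f \equiv 1$ as the representative flat function.

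For the implication ``locally finite $\Rightarrow$ flat growth'', let $H$ be a finitely generated subgroup of $G$, which by hypothesis is finite. For any $s$ and any $h \in H$ the set $A_h^{(n,s)}$ is contained in $H$, so $gr_{(s,h)}(n) \le \sharp(H)$ for every $n$; thus the growth function of $H$ is bounded and $gr_{(s,h)} \preceq 1$. Hence every finitely generated subgroup of $G$ has growth type at most the constant function $1$, and Theorem~\ref{GrowthOfSubgroups} then yields that $G$ itself has growth type at most $1$, i.e. $G$ is of flat growth.

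For the converse ``flat growth $\Rightarrow$ locally finite'', I would take an arbitrary finitely generated subgroup $H$ and show it is finite. Viewing $H$ as a subset of $G$, Proposition~\ref{GeneralPropertiesGrowth}(1) shows $H$ is of growth type at most the constant function $1$. Now $d_G|_H$ is a proper left invariant metric on the finitely generated group $H$, hence coarsely equivalent to a word metric; since growth type is preserved under coarse equivalence (Proposition~\ref{GeneralPropertiesGrowth}(2)) and coincides with the classical growth for word metrics (Proposition~\ref{GeneralPropertiesGrowth}(3)), the classical growth function of $H$ is bounded. But an infinite finitely generated group has unbounded classical growth: if $B(1_H,n)=B(1_H,n+1)$ for some $n$, then $B(1_H,n)$ is closed under multiplication by the (finitely many) generators and hence equals all of $H$, forcing $H$ finite; so for infinite $H$ the cardinals $\sharp(B(1_H,n))$ strictly increase and tend to infinity. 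Therefore $H$ must be finite, and as $H$ was arbitrary, $G$ is locally finite.

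I expect the main obstacle to be this last step in the converse, namely verifying that an infinite finitely generated group genuinely fails to have bounded classical growth and correctly transporting that fact back through the chain of equivalences. Concretely, the care is needed in two places: reconciling ``flat growth'' with the $\preceq$-type of constant functions (so that boundedness, not any particular constant, is what matters), and ensuring that replacing $d_G|_H$ by a word metric on $H$ does not alter the growth type. Both are handled by Proposition~\ref{GeneralPropertiesGrowth}, so once the ball-stabilization argument is in place the proof closes cleanly.
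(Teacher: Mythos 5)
Your proof is correct and follows essentially the same route as the paper: both directions reduce to finitely generated subgroups via Proposition \ref{GeneralPropertiesGrowth} and Theorem \ref{GrowthOfSubgroups}. The only difference is that you spell out the ball-stabilization argument showing a finitely generated group of bounded growth is finite, a step the paper's proof asserts without detail.
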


\begin{proof} Let $f(x) = 1$ be a constant function. If $G$ is of growth type at most $f$ then all its finitely generated groups are of growth type at most $f$. 
Hence they are finite. \par
Conversely if $G$ is locally finite then for each finitely generated subgroup there is a constant $C >0$ such that $H$ is of growth type at most $f'(x) = C$. As all (non zero) 
constant function are of the same type we obtain that $G$ is of growth type at most $f'$.
\end{proof}

\begin{Cor} If a countable group $(G, d_G)$ is of polynomial growth then it is locally virtually nilpotent and of finite asymptotic dimension. 
Conversely if every subgroup $H \subset G$ is virtually nilpotent of polynomial growth with bounded degree then 
$(G, d_G)$ is of polynomial growth and finite asymptotic dimension.
\end{Cor}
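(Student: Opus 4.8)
The plan is to reduce both implications to finitely generated subgroups and then feed the resulting data into the classical theory. Two external inputs will carry most of the weight: Gromov's polynomial growth theorem (a finitely generated group of polynomial growth is virtually nilpotent) and the Bass--Guivarc'h growth formula together with the fact that a finitely generated nilpotent group has asymptotic dimension equal to its Hirsch length. The internal machinery of Section \ref{Growth} then handles the passage from the finitely generated subgroups back to $G$.

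For the forward direction, suppose $G$ has polynomial growth, say of growth type at most $p(x) = x^{d}$. By Proposition \ref{GeneralPropertiesGrowth}(1) every subset of $G$ has growth type at most $p$, and by part (3) of the same proposition, for a finitely generated subgroup $H$ this growth type coincides with the classical growth of $H$ with respect to a word metric. Hence every finitely generated subgroup $H$ has classical polynomial growth of degree at most $d$; Gromov's theorem then shows $H$ is virtually nilpotent, so $G$ is locally virtually nilpotent. To bound the asymptotic dimension I would observe that the Hirsch length $h(H)$ of the finite-index nilpotent subgroup of $H$ satisfies $h(H) \le d$ by Bass--Guivarc'h, and that $\asdim H = h(H) \le d$. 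Theorem \ref{AsymptoticDimensionCountable} then gives $\asdim(G) = \sup_{F} \asdim(F) \le d < \infty$.

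For the converse, assume there is a uniform bound $d$ on the degrees of the polynomial growth of the finitely generated subgroups $H \subset G$. Then each such $H$ has growth type at most $x^{d}$, and Theorem \ref{GrowthOfSubgroups} upgrades this to the statement that $G$ itself has growth type at most $x^{d}$, i.e. $G$ is of polynomial growth. The asymptotic dimension estimate is identical to the forward case: each finitely generated subgroup is virtually nilpotent of growth degree at most $d$, hence of asymptotic dimension at most $d$, so Theorem \ref{AsymptoticDimensionCountable} again yields $\asdim(G) \le d$.

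The main obstacle is not the transfer between $G$ and its finitely generated subgroups, which is handled cleanly by Theorems \ref{GrowthOfSubgroups} and \ref{AsymptoticDimensionCountable}; it is rather the necessity of the uniform degree bound to convert ``polynomial growth of each subgroup'' into a single polynomial bound for the whole group. Without such a bound one only obtains a subpolynomial (indeed subexponential) type, as the example $\bigoplus_{i=1}^{\infty} \ZZ$ illustrates. The other delicate ingredient is the quantitative inequality $\asdim H \le d$, which rests on identifying the asymptotic dimension of a finitely generated nilpotent group with its Hirsch length and on the Bass--Guivarc'h relation between Hirsch length and growth degree; these are standard facts that must be cited with care, and the virtually nilpotent case is reduced to the nilpotent one by the invariance of asymptotic dimension under passage to a finite-index subgroup.
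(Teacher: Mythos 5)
Your proposal is correct and follows essentially the same route as the paper: Gromov's theorem plus Proposition \ref{GeneralPropertiesGrowth}(1),(3) for the forward direction, the Bass formula combined with the Dranishnikov--Smith identification of the asymptotic dimension of a finitely generated nilpotent group with its Hirsch length for the dimension bound, and Theorems \ref{AsymptoticDimensionCountable} and \ref{GrowthOfSubgroups} to pass between $G$ and its finitely generated subgroups. Your write-up is in fact more explicit than the paper's (which leaves the assembly of these ingredients to the reader), and your remarks on the necessity of the uniform degree bound and on reducing the virtually nilpotent case to the nilpotent one are accurate.
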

\begin{proof}
By the well known Gromov's theorem (see\cite{Gromov2}) any finitely generated group of polynomial growth is virtually nilpotent. 
By Bass theorem (\cite{Bass}) we know that if  $G'$ is (finitely generated) and nilpotent and $G' = G_1 \supseteq G_2 = [G', G'] \supseteq ...\supseteq G_k = \{1\}$ 
is its lower central series then the degree $d(G')$ of the growth type of $G'$ is $d(G') = \Sigma_{i=1}^k i\cdot d_i$ with $d_i$ the torsion free rank of 
$G_n/G_{n+1}$. But by \cite{Dran-Smith} we get that the asymptotic dimension of  a nilpotent group $G'$ is $\asdim(G') = \Sigma_{i= 1}^n d_i$. The corollary follows easily
applying theorem \ref{AsymptoticDimensionCountable}, $1$ of proposition \ref{GeneralPropertiesGrowth} and theorem \ref{GrowthOfSubgroups}  
\end{proof}

\section{Countable abelian torsion groups}\label{Torsion}
The aim of this section is to study abelian groups of asymptotic dimension zero i.e. abelian torsion groups. In \cite{Brod-Dydak-Higes-Mitra} it was shown how to 
define in a locally finite group a proper left invariant metric. Let us recall the way of doing this. Such construction will become relevant to us in the following sections. 
\begin{Def}
Let $G$ be a  (countable) locally finite group. We will say that an ascending chain $\LL = \{\{1\} = G_0 < G_1 < G_2 ...\}$ of subgroups of $G$ with
$G = \bigcup_{i=1}^{\infty} G_i$   is a {\it{one-step ascending chain}}
if there exists a sequence $\{g_i\}_{i \in \NN}$ of elements of $G$ such that $G_i$ is generated by $\{g_j\}_{j=1}^i$ and  $g_{i+1} \not\in G_i$. \par
A sequence $\{g_i\}_{i= 1}^{\infty}$ with such properties will be called  {\it{one-step sequence of generators}} or simply a sequence of generators. \par
For each one-step ascending chain we have the sequence of numbers
$\{m_i = [G_{i+1} : G_i]\}_{i \in \NN \cup \{0\}}$. We will call such sequence as the {\it{one-step sequence of indexes}} or simply the sequence of indexes.
\end{Def}

\begin{Def}\label{OnUltrametrics}
Given a one-step ascending chain $\LL =\{G_0 < G_1<...\}$ in a locally finite group $G$ and a sequence of positive
numbers $\{K_i\}_{i\in \NN}$ with $K_{i+1} > K_i$ and $\lim_{i\to \infty} K_i = \infty$ we can construct the following proper left invariant ultrametric:
\[d_G(x, y) = K_i \text{ if } x^{-1}\cdot y \in G_i\setminus G_{i-1}\]
We will call such metric the ultrametric generated by $\LL$ and $\{K_i\}_{i\in \NN}$. See \cite{Brod-Dydak-Higes-Mitra} for more details.
\end{Def}

\begin{Lem}\label{AbelianCanHavePrime}
For every locally finite group $G$ there exists a one step ascending chain. If $G$ is abelian we can take a one-step ascending chain with prime sequence of indexes.
\end{Lem}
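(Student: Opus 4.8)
The plan is to prove both assertions of Lemma~\ref{AbelianCanHavePrime} by building the ascending chain greedily, one generator at a time. For the first statement, I would enumerate the countable locally finite group as $G = \{h_1, h_2, \dots\}$ and construct the one-step sequence $\{g_i\}$ inductively: having built $G_i = \langle g_1, \dots, g_i\rangle$, which is a \emph{finite} group since $G$ is locally finite, I would let $g_{i+1}$ be the first element in the enumeration that does not already lie in $G_i$. This guarantees $g_{i+1} \notin G_i$ (the one-step condition) and a standard bookkeeping argument shows $\bigcup_i G_i = G$, since every $h_k$ is eventually captured. Each $G_i$ is finite, so all indices $m_i = [G_{i+1}:G_i]$ are finite as required.

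The interesting content is the second statement: when $G$ is abelian, I can arrange that every index $m_i$ is prime. The key idea is that in the abelian case I have freedom not only in \emph{which} element I adjoin but in adjoining a suitable \emph{power} or multiple of a chosen generator so that the subgroup grows by a prime-index step. Concretely, suppose I have reached a finite subgroup $G_i$ and pick any element $x \notin G_i$ (if none exists, $G = G_i$ is finite and we are done trivially). Consider the cyclic subgroup $\langle x \rangle$ and look at the quotient behavior: since $G$ is a torsion group, $x$ has finite order, and I want to interpose subgroups between $G_i$ and $\langle G_i, x\rangle$ so that each successive inclusion has prime index. The plan is to show that for any finite abelian extension $G_i < \langle G_i, x\rangle$ one can refine the single step into a chain $G_i = H_0 < H_1 < \dots < H_t = \langle G_i, x\rangle$ with each $[H_{j+1}:H_j]$ prime.

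This refinement is the main obstacle, and it is where I would invest the real work. The cleanest route is to reduce to the cyclic quotient $\langle G_i, x\rangle / G_i$, which is a finite abelian group; by the fundamental theorem of finite abelian groups (or simply because it is cyclic, generated by the image of $x$), it is a finite cyclic group of some order $N = p_1 p_2 \cdots p_t$ (with repetition). A finite cyclic group of order $N$ admits a composition series whose successive factors are cyclic of prime order, namely $\ZZ_{p_1} \subset \ZZ_{p_1 p_2} \subset \cdots \subset \ZZ_N$; pulling this chain back along the quotient map $\langle G_i, x\rangle \to \langle G_i, x\rangle / G_i$ yields the desired intermediate subgroups $H_j$ with prime indices, each still containing $G_i$ and each finitely generated over $G_i$ by one additional element. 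Interleaving these refinement steps into the greedy construction of the first part produces a single one-step ascending chain for all of $G$ with a prime sequence of indices, and the covering property $\bigcup_i G_i = G$ is preserved because the refinement only inserts finitely many subgroups between consecutive stages.

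One point that requires care is verifying that the refined chain still satisfies the strict one-step condition, i.e.\ that each newly inserted $H_{j+1}$ is generated over $H_j$ by adjoining a single element with $H_{j+1} \neq H_j$; this follows because each prime-index extension of a finite abelian group is generated by one extra element (any element of $H_{j+1} \setminus H_j$ suffices, since the quotient is cyclic of prime order). I would also confirm that the lift of the cyclic composition series gives genuinely increasing subgroups—this is automatic since the quotient chain is strictly increasing and the quotient map is surjective. With these verifications in place, relabeling the full collection of subgroups as $\{G_i\}$ and the adjoined elements as the one-step sequence of generators completes the proof.
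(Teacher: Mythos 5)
Your proposal is correct and follows essentially the same route as the paper: both arguments reduce the prime-index claim to refining each cyclic step $G_i < \langle G_i, g_{i+1}\rangle$ via the chain of subgroups of the cyclic quotient $G_{i+1}/G_i$ (the paper writes the interpolating generators explicitly as multiples $\bigl(\prod_{j=1}^{k} p_j\bigr)\cdot g_{i+1}$, which is exactly your pulled-back composition series). The extra care you take with the greedy construction, the covering property, and the one-generator condition at each inserted step only makes explicit what the paper leaves implicit.
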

\begin{proof}
The first statement is trivial. For the second suppose given a one
step chain $\LL = \{G_0 < G_1<...\}$ with sequence of generators
$\{g_m\}_{m\in \NN}$. Suppose that all the indexes are prime till
some $G_i$. Note that $G_{i+1}/G_i$ is a cyclic group generated by
$g_{i+1}+ G_i$ with order $m_i = [G_{i+1}: G_i]$ then if $m_i =
\Pi_{j= 1}^n p_j$ with $p_j$ prime numbers (not necessarily
distinct) we will modify the sequence of generators in the
following way. The new sequence is given by $\{g_m'\}_{m\in\NN}$
with $g_m' = g_m$ if $m \le i$, $g_m' = \Pi_{j=1}^k\cdot p_j \cdot
g_{i+1}$ if $m = i+k$ with $k \le n$ and $g_m' = g_{i+s}$ if $m =
i+n+s-1$ with $s \ge 2$. Then the new one-step chain has prime
indexes till at least $G_{i+2}'$ and we can repeat the process.
\end{proof}

Next theorem is an easy consequence of some of the results of section 5 in \cite{Brod-Dydak-Higes-Mitra}(see also \cite{Prota}).

\begin{Thm}\label{OldTheorem}
A locally finite group $G$ admits an proper left invariant ultrametric $d_u$ such that $(G, d_u)$ is coarsely equivalent
to $(\bigoplus_{i=1}^{\infty} \ZZ_{p_i}, d_L)$ with
$\{p_i\}_{i\in \NN}$ a sequence of prime numbers and $d_L$ a proper left invariant ultrametric. If $G$ is abelian then we can take  $d_u$ such that
$(G,d_u)$ is isometric to $(\bigoplus_{i=1}^{\infty} \ZZ_{p_i}, d_L)$.
\end{Thm}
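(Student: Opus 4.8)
The plan is to realise both spaces as ultrametrics coming from one-step ascending chains (Definition~\ref{OnUltrametrics}) and to exploit the fact that such an ultrametric is completely determined by the \emph{branching data} of the chain, namely the sequence of indexes together with the chosen scales $\{K_i\}$. For the abelian isometric statement this lets me write down an explicit coordinate bijection; for the general statement I reduce composite indexes to prime ones at the cost of passing from an isometry to a coarse equivalence, which is exactly what the results of \S5 of \cite{Brod-Dydak-Higes-Mitra} provide.

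First I treat the abelian case. By Lemma~\ref{AbelianCanHavePrime} I fix a one-step ascending chain $\{1\}=G_0<G_1<\cdots$ with sequence of generators $\{g_i\}$ and with each index $p_i:=[G_i:G_{i-1}]$ prime, and I let $d_u$ be the ultrametric generated by this chain and a sequence $\{K_i\}$ as in Definition~\ref{OnUltrametrics}. Since $[G_i:G_{i-1}]=p_i$ is prime and $g_i\notin G_{i-1}$, the quotient $G_i/G_{i-1}$ is cyclic of order $p_i$ generated by the image of $g_i$; a counting argument then shows that every $g\in G_n$ has a \emph{unique} expansion $g=g_1^{a_1}g_2^{a_2}\cdots g_n^{a_n}$ with $0\le a_i<p_i$ (the number of such tuples is $\prod_{i\le n}p_i=\abs{G_n}$, and they surject onto $G_n$). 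Reading off the exponents defines a bijection $\phi\colon G\to\bigoplus_{i=1}^\infty\ZZ_{p_i}$, $\phi(g)=(a_1,a_2,\dots)$.

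The heart of the matter is to compute the ultrametric in these coordinates. I claim that for $x=g_1^{a_1}\cdots$ and $y=g_1^{b_1}\cdots$ one has $xG_i=yG_i$ if and only if $a_j=b_j$ for all $j>i$. Indeed $xG_i=g_{i+1}^{a_{i+1}}\cdots g_n^{a_n}G_i$, and the same counting argument applied to $G_n/G_i$ (whose order is $\prod_{j>i}p_j$) shows that the tails $(a_{i+1},\dots,a_n)$ parametrise the cosets of $G_i$ bijectively. Consequently the level of $x^{-1}\cdot y$, i.e.\ the unique $i$ with $x^{-1}\cdot y\in G_i\setminus G_{i-1}$, equals $\max\{j:a_j\ne b_j\}$, so $d_u(x,y)=K_{\max\{j:a_j\ne b_j\}}$. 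Running the identical computation on $\bigoplus\ZZ_{p_i}$ with its standard chain $H_i=\bigoplus_{j\le i}\ZZ_{p_j}$ and the same scales $\{K_i\}$ gives $d_L(\phi(x),\phi(y))=K_{\max\{j:a_j\ne b_j\}}$. Hence $\phi$ is an isometry, proving the second assertion.

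For a general locally finite $G$ the first part of Lemma~\ref{AbelianCanHavePrime} still supplies a one-step chain, now with a possibly composite index sequence $\{m_i\}$, and hence an ultrametric $d_u$. Here no group-theoretic refinement into prime indexes is available (e.g.\ $\ZZ_4$ is not $\ZZ_2\oplus\ZZ_2$), so I cannot expect an isometry. Instead I factor each $m_i$ into primes and interpolate extra scales inside each gap $[K_{i-1},K_i]$, producing the prime-branching ultrametric on $\bigoplus\ZZ_{p_i}$; the point is that such a refinement of the branching data leaves the cofinal scale behaviour unchanged and therefore yields a coarse equivalence. This last step is precisely the content I borrow from \S5 of \cite{Brod-Dydak-Higes-Mitra} (see also \cite{Prota}), and it is the main obstacle, since verifying that subdividing the ball structure at bounded scale is a coarse equivalence is where the real work of that section lies.
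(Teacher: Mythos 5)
Your proposal is correct and follows essentially the same route as the paper: both arguments reduce everything to the observation that a chain-generated ultrametric is determined by the nested coset partition (the ``branching data''), use Lemma~\ref{AbelianCanHavePrime} to get prime indexes and an explicit coordinate isometry in the abelian case, and defer the composite-index general case to \S5 of \cite{Brod-Dydak-Higes-Mitra}. The only cosmetic difference is that you spell out the isometry $G\cong\bigoplus\ZZ_{p_i}$ that the paper delegates to Theorem~5.3 of that reference, and you handle the general case by refining the ball structure directly rather than by first passing to the isometric abelian model $\bigoplus\ZZ_{a_i}$ and then changing the proper left invariant metric there; both versions rest on the same cited facts.
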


\begin{proof}
Given a one-step chain $\LL$ with indexes $\{a_i\}_{i\in \NN}$ we can generate an ultrametric $d_u$ as in \ref{OnUltrametrics} with the sequence $K_i =i$.
It can be checked easily that it is isometric to $G' = (\bigoplus_{i=1}^{\infty}\ZZ_{a_i}, d_L')$ (see Theorem 5.3 of \cite{Brod-Dydak-Higes-Mitra}) 
but as this group is abelian by \ref{AbelianCanHavePrime}
taking a one-step ascending
chain $\LL'$
with prime sequence of indexes $\{p_i\}_{i\in \NN}$
we generate an ultrametric isometric to $(\bigoplus_{i=1}^{\infty} \ZZ_{p_i}, d_L)$. Thus $(G, d_u)$ is bi-uniformly equivalent to $(\bigoplus_{i=1}^{\infty} \ZZ_{p_i}, d_L)$.
If $G$ is abelian we can take directly $\{a_i\}_{i\in\NN}$ as prime numbers in the first step.
\end{proof}
We focus now on the coarse classification of abelian torsion groups or more generally in view of \ref{OldTheorem} in locally finite groups. 
Such classification corresponds with problem 5.3. of \cite{Brod-Dydak-Higes-Mitra} (mentioned as problem 1606 in the book \cite{Pearl}). Now 
by Theorem \ref{OldTheorem} we can build a proper left invariant metric in a locally finite group coarsely equivalent to $(\bigoplus_{i=1}^{\infty}\ZZ_{p_i}, d_L)$ with 
$d_L$ an ultrametric. So we need a coarse classification of groups $(\bigoplus_{i=1}^{\infty}\ZZ_{p_i}, d_L)$ with $d_L$ an proper ultrametric. \par
The crucial step to do this 
has appeared recently in a paper of Banakh and Zarichnyi \cite{Ban-Zar}. In such paper the authors showed that all groups of the form 
$(\bigoplus_{i=1}^{\infty}\ZZ_{p_i}, d_L)$ are the base $[T]$ of an asymptotically homogeneous tower
(see the paper for more details) and also they proved the following  
theorem(Theorem 4 of \cite{Ban-Zar}):

\begin{Thm}(Banakh-Zarichnyi \cite{Ban-Zar})The base of each asymptotically homogeneous tower $T$ is asymorphic(coarsely equivalent) the anti-Cantor set $2^{<\omega}$. 
\end{Thm}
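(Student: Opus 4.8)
The plan is to work in the tower formalism, which is just the observation that a proper ultrametric space is the same thing as a rooted tree: for each scale $K_i$ the space is partitioned into balls, and each ball at scale $K_{i+1}$ subdivides into $m_i = [G_{i+1}:G_i]$ balls at the next smaller scale $K_i$, exactly the branching indices of Definition \ref{OnUltrametrics}. The base $[T]$ is the resulting ultrametric space, a coarse equivalence between two bases is precisely an \emph{asymorphism} of the towers, and the anti-Cantor set $2^{<\omega}$ is the base of the standard binary tower, in which $m_i \equiv 2$ at every level (this is $(\bigoplus_{i=1}^{\infty}\ZZ_2, d_L)$). In this language the statement becomes a universality result: asymptotic homogeneity should force the coarse type of $[T]$ to be the single canonical type of the binary tower.

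The first step I would carry out is a regrouping, or normal form. A coarse equivalence of ultrametric spaces is free to reparametrize the scale, so I would pass to a subsequence of the scales, grouping the original levels into consecutive blocks; the branching across block $j$ is then the product $N_j = \prod_{i\in\text{block }j} m_i$. The idea is to choose the blocks so that each $N_j$ falls into a uniformly controlled range, in the same spirit as the level-refinement performed in Lemma \ref{AbelianCanHavePrime}, only now run in the coarsening direction. Asymptotic homogeneity is exactly the hypothesis that guarantees such a uniform blocking can be made simultaneously along the whole tower, so that the reparametrized scale sequence is coarsely equivalent to the original one.

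Next I would match the regrouped tower to the binary one. To block $j$ of $[T]$, of branching $N_j$, I would assign a run of $k_j \approx \log_2 N_j$ binary levels, whose branching $2^{k_j}$ is comparable to $N_j$. Proceeding block by block and respecting the nesting of balls, one builds a bijection $f : [T] \to 2^{<\omega}$ that carries each block-boundary ball into a corresponding ball of the binary tree, the small discrepancy between $N_j$ and $2^{k_j}$ being absorbed into the bounded multiplicity allowed by a coarse map. Since both bases are countably infinite and homogeneity makes the ball cardinalities at matched levels comparable, a back-and-forth construction produces such an $f$ together with a coarse inverse.

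The hard part will be the distortion control, since an arbitrary bijection between ultrametric spaces is essentially never a coarse equivalence. Everything hinges on showing that the induced reparametrization $\phi$ of the scale indices distorts distances by only bounded amounts, i.e. that $d(x,y)$ and $d\bigl(f(x),f(y)\bigr)$ determine one another up to coarse functions. This is precisely where the asymptotic homogeneity hypothesis must be used quantitatively: it should yield that the cumulative branching $\prod m_i$ between any two far-apart levels of $[T]$ is uniformly comparable to its binary counterpart $2^{k}$, so that $\phi$ is a coarse equivalence of the scale sequences and both $f$ and $f^{-1}$ come out coarse. Establishing this uniform comparability estimate is the crux of the argument; the blocking and the back-and-forth matching are then routine bookkeeping.
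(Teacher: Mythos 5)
First, a point of order: the paper does not prove this statement at all --- it is quoted verbatim, with attribution, as Theorem 4 of \cite{Ban-Zar} and then used as a black box to deduce Corollary \ref{TorsionAbelian}. So there is no proof in the paper to compare yours against, and your attempt has to be judged on its own terms.

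Judged that way, it has a genuine gap, located exactly where you yourself say the crux is. Your plan is to block the levels so that block $j$ has branching $N_j$, to match block $j$ with a run of $k_j\approx\log_2 N_j$ binary levels, and to build a bijection block by block. The per-block statement ``$2^{k_j}$ is comparable to $N_j$'' is cheap; the obstruction is that a coarse equivalence of proper ultrametric spaces imposes a \emph{cumulative} cardinality constraint. If $f$ admits a coarse inverse with errors bounded by the scale of level $i_0$, then $f$ is injective on level-$i_0$ balls and its image meets every level-$i_0$ ball of $2^{<\omega}$; counting sub-balls inside a ball at a deep level $n$ then forces $\prod_{i\le n}N_i$ and $\prod_{i\le n}2^{k_i}$ to agree up to a factor controlled by boundedly many levels near $n$. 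Rounding each block independently, $k_j=\lceil\log_2 N_j\rceil$, makes the cumulative ratio $\prod_{j\le n}(2^{k_j}/N_j)$ a product of factors in $[1,2)$, which diverges unless almost all $N_j$ are powers of $2$; concretely, deeper and deeper sub-balls of $2^{<\omega}$ are missed entirely by the image, and in the ultrametric they are \emph{far} from the image, so coarse surjectivity fails. The fix is to choose the $k_j$ adaptively so that the \emph{partial products} $\prod_{i\le j}2^{k_i}$ track $\prod_{i\le j}N_i$ within a fixed factor, and then to build the bijection by an exact redistribution (or back-and-forth at the level of towers) that respects all ball partitions simultaneously up to a bounded level shift --- none of which is ``routine bookkeeping.'' Relatedly, your sketch never states what ``asymptotically homogeneous'' means, so the one hypothesis of the theorem is never used in a checkable way, even though it is precisely what guarantees that branching is uniform within each level, at least $2$, and finite --- the inputs every step above silently relies on.
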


A direct consequence of this theorem (and the previous theorem \ref{OldTheorem}, see \cite{Brod-Dydak-Higes-Mitra}) is 
the following corollary that solves completely the cited problem:

\begin{Cor}\label{TorsionAbelian} Any two locally finite groups are coarsely equivalent.
\end{Cor}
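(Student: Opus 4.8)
The plan is to show that every locally finite group is coarsely equivalent to one fixed model space, the anti-Cantor set $2^{<\omega}$; transitivity of coarse equivalence then yields the statement immediately. Since coarse equivalence is an equivalence relation on metric spaces, it suffices to produce, for an arbitrary locally finite group $G$ equipped with an arbitrary proper left invariant metric $d_G$, a chain of coarse equivalences connecting $(G, d_G)$ to $2^{<\omega}$. Applying this to two locally finite groups $G$ and $G'$ and using symmetry and transitivity finishes the proof.

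The chain I would build has three links. The first link uses the fact (from \cite{Smith}) that any two proper left invariant metrics on a countable group are coarsely equivalent via the identity map; hence $(G, d_G)$ is coarsely equivalent to $(G, d_u)$, where $d_u$ is the particular ultrametric furnished by Theorem \ref{OldTheorem}. The second link is Theorem \ref{OldTheorem} itself, which gives that $(G, d_u)$ is coarsely equivalent to $(\bigoplus_{i=1}^{\infty} \ZZ_{p_i}, d_L)$ for some sequence of primes $\{p_i\}$ and a proper left invariant ultrametric $d_L$. For the third link I would invoke the observation of Banakh and Zarichnyi that every group of the form $(\bigoplus_{i=1}^{\infty} \ZZ_{p_i}, d_L)$ arises as the base of an asymptotically homogeneous tower, together with their theorem \cite{Ban-Zar} that the base of any asymptotically homogeneous tower is coarsely equivalent to $2^{<\omega}$.

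Concatenating these three links yields $(G, d_G)$ coarsely equivalent to $2^{<\omega}$ for every locally finite $G$ and every proper left invariant metric on it, and the corollary follows. The genuinely hard part of the statement has already been absorbed into the cited Banakh-Zarichnyi theorem, whose proof establishes the asymorphism with the anti-Cantor set, and into Theorem \ref{OldTheorem}; what remains here is only the bookkeeping of assembling the equivalences. Consequently I expect no real obstacle beyond verifying that the group produced by Theorem \ref{OldTheorem} genuinely satisfies the hypotheses needed to be realized as the base of an asymptotically homogeneous tower, so that the theorem of \cite{Ban-Zar} applies.
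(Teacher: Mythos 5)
Your proposal is correct and follows essentially the same route as the paper: the paper also obtains the corollary by combining Theorem \ref{OldTheorem} (every locally finite group with a suitable proper left invariant ultrametric is coarsely equivalent to some $(\bigoplus_{i=1}^{\infty}\ZZ_{p_i}, d_L)$) with the Banakh--Zarichnyi theorem identifying all such groups, as bases of asymptotically homogeneous towers, with the anti-Cantor set $2^{<\omega}$, and then uses transitivity. Your explicit first link (independence of the choice of proper left invariant metric, via \cite{Smith}) is a correct and tacitly assumed ingredient of the paper's one-line deduction.
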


\section{Odd pairs and coarse splitting}\label{Odd}

The notion of one-step ascending chain can be generalized to
countable groups. Let $H$ be a normal subgroup of a countable
group $G$ such that $G/H$ is locally finite. Let $\pi: G \to G/H$
be the usual projection. Now take a one-step ascending chain $\LL'
= \{H/H = G'_0 < G'_1<...\}$ of $G/H$ with $\{g'_i\cdot H\}_{i \in
\NN}$ a sequence of generators and $\{m_i\}$ the sequence of
indexes. The ascending chain $\LL = \{H = G_0 < G_1 <...\}$ of $G$
given by $G_i = \pi^{-1}(G'_i)$ will be called a {\it{one-step
ascending chain associated to $H$}} or a {\it{one-step ascending
chain derived from  $\LL'$}} when the ascending chain $\LL'$ needs
to be remarked. A sequence $\{g_i\}_{i\in \NN}$ such that $g_i \in
g'_i \cdot H$ will be called a sequence of generators and the
sequence $\{m_i\}_{i\in\NN}$ will be the sequence of indexes. \par
Given $d_H$ and $d_{G/H}$ two proper left invariant metrics 
we are
interested in the process of building a (nice) proper left
invariant metric $d_G$ related somehow with $d_H$ and $d_{G/H}$. We will show that it is possible
for some kind of pairs $(G, H)$ of countable abelian groups $H <
G$ and for some kind of metrics $d_{G/H}$.

\begin{Def} We will say that a pair of countable abelian groups $(G, H)$ with $H < G$ is an {\it{odd pair}} if every element of $G/H$ has odd order.\par
Analogously a pair of countable abelian groups $(G, H)$ with $H < G$ will  be an {\it even pair} if the order of every element of $G/H$ is a power of $2$.
\end{Def}
Elements of an odd pairs has the following nice representation.

\begin{Lem}
\label{Representation} Let $(G, H)$ be an odd pair and let $\LL
=\{G_0 < G_1...\}$ be a one-step ascending chain associated to $H$
with generators $\{g_i\}_{i\in \NN}$ and (odd) indexes $\{m_i =
2\cdot k_i +1\}_{i \in \NN}$ then every element $g \in G$ can be
written uniquely as:
\begin{equation*}
g = h_g + \sum_{i=1}^{\infty} r_i\cdot g_i \text{ (*) }
\end{equation*}
with the properties:
\begin{enumerate}
\item $h_g \in H$
\item $r_i \in [-k_i , k_i] \cap \ZZ$.
\item There exists a minimum $i_0$ so that $r_i = 0$ for every $i > i_0$.
\end{enumerate}
\end{Lem}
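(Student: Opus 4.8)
The plan is to produce the expansion by a finite downward induction along the chain $\LL$, using at each stage that the successive quotients are cyclic of odd order, and then to prove uniqueness by comparing the top disagreeing coefficient. First I would record the structural facts needed. Since $\LL'$ exhausts $G/H$, we have $G=\bigcup_i G_i$, so every $g\in G$ lies in $G_{i_0}$ for some smallest $i_0$; this is the $i_0$ of condition (3). Next, because $G_i=\pi^{-1}(G'_i)$, the projection $\pi$ induces an isomorphism $G_i/G_{i-1}\cong G'_i/G'_{i-1}$, so each $G_i/G_{i-1}$ is cyclic, generated by the coset of $g_i$, of order $m_i=2k_i+1$. The one observation that makes everything work is purely arithmetic: since $m_i$ is \emph{odd}, the set $[-k_i,k_i]\cap\ZZ$ has exactly $2k_i+1=m_i$ elements and these are pairwise incongruent modulo $m_i$, hence form a complete system of residues. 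This is precisely where the odd hypothesis enters.

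For existence I would argue by descending induction. Given $g\in G_{i_0}$, look at its image in the cyclic group $G_{i_0}/G_{i_0-1}$: by the residue observation there is a unique $r_{i_0}\in[-k_{i_0},k_{i_0}]\cap\ZZ$ with $g-r_{i_0} g_{i_0}\in G_{i_0-1}$. Replacing $g$ by $g-r_{i_0} g_{i_0}$ and repeating down the chain produces coefficients $r_{i_0},r_{i_0-1},\dots,r_1$, each in the prescribed range, such that $g-\sum_{i=1}^{i_0} r_i g_i$ finally lands in $G_0=H$; calling this element $h_g$ and setting $r_i=0$ for $i>i_0$ gives a representation of the required form, with (3) holding automatically by the choice of $i_0$.

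For uniqueness, suppose $h_g+\sum_i r_i g_i=h'_g+\sum_i r'_i g_i$ are two admissible representations and let $j$ be the largest index with $r_j\ne r'_j$ (it exists because only finitely many coefficients are nonzero). All terms of index $<j$, together with $h_g$ and $h'_g$, lie in $G_{j-1}$, and the terms of index $>j$ cancel; reducing the equality modulo $G_{j-1}$ therefore gives $(r_j-r'_j)(g_j+G_{j-1})=0$ in $G_j/G_{j-1}$, so $m_j\mid(r_j-r'_j)$. But $r_j,r'_j\in[-k_j,k_j]$ force $\abs{r_j-r'_j}\le 2k_j<m_j$, whence $r_j=r'_j$, a contradiction. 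Thus all coefficients agree and then $h_g=h'_g$.

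I do not expect a serious obstacle here; the only things to be careful about are tracking the cyclic quotient structure of the chain and recognizing that oddness is used twice — once to guarantee that $[-k_i,k_i]\cap\ZZ$ is a full residue system (existence) and once through the strict inequality $\abs{r_j-r'_j}\le 2k_j<m_j$ (uniqueness). This is exactly why an analogous symmetric normal form would fail for even orders, which is what forces the separate treatment of even pairs later.
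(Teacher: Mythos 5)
Your proof is correct and takes essentially the same approach as the paper: the same descending recursion through the cyclic quotients $G_i/G_{i-1}$, using that $[-k_i,k_i]\cap\ZZ$ is a complete symmetric residue system modulo the odd order $m_i$. Your uniqueness argument (comparing the top disagreeing coefficient and using $\abs{r_j-r'_j}\le 2k_j<m_j$) is spelled out more carefully than in the paper, which merely asserts that uniqueness is an easy consequence of the construction, but it is precisely the intended argument.
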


\begin{proof}
To prove this we note that $G_i /G_{i-1} = \{-k_i\cdot g_i + G_{i-1}, ..., -g_i+G_{i-1}, G_{i-1}, g_i+ G_{i-1},..., k_i\cdot g_i+G_{i-1}\}$. For each $g \in G_i$ we define
$r_i(g)$ as the
integer  of $[-k_i, k_i]\cap \ZZ$ such that $g$ is in the class $r_i(g) \cdot g_i+G_{i-1}$.
Given $g \in G$ let $n$ be the minimum integer such that $g \in G_n$. We define the numbers $r_i$ and
the element $h_g\in H$ in the following way:
\[r_i = 0 \text{ if } i> n\] \[r_n = r_n(g)\] \[r_{i-1} = r_{i-1}(g- \sum_{k =i}^n r_k \cdot g_k) \text{ if } i>1 \] \[h_g = g-\sum_{k = 1}^n r_k \cdot g_k\]
It is obvious that $g = h_g + \sum_{i=1}^{\infty} r_i \cdot g_i$ and that $r_i$ and $h_g$ verify the three properties.
Finally uniqueness is an easy consequence of the construction.
\end{proof}
This representation theorem is the main difference between even pairs and odd pairs. For a non odd pair there will be exists some elements in $G/H$ of order $2$, it means 
elements equal to its inverse and as we see we would run into problems when we construct $d_G$.\par
 
The following construction for odd pairs is similar to the one of theorem \ref{OldTheorem}
\begin{Lem}\label{NormsAreNorms} Let $(G, H)$ be an odd pair and let $\LL =\{H = G_0 < G_1 <...\}$ be an ascending chain derived from 
$\LL' = \{G'_0< G'_1<...\}$. Suppose there exist a proper norm $\|\cdot\|_H$ and a sequence of positive numbers 
$\{K_n\}_{n\in \NN}$ such that for every $n \in \NN$ we have $K_n \ge \sum_{i= 1}^{n} (n-i+1)\cdot\| h_{m_i \cdot g_i}\|_H$ and $K_{n+1} > K_n$. If we define the proper left invariant ultrametric in $G/H$ associated to $\LL'$ and $\{K_n\}$ (see \ref{OnUltrametrics} then  the function $\|\cdot\|_G: G \times G \to \RR_+$ defined by:
\[\|g = h_g + \sum_{i=1}^{\infty} r_i\cdot g_i\|_G = \|h_g\|_H + \|\pi(\sum_{i=1}^{\infty} r_i\cdot g_i)\|_{G/H}\]
is a proper norm in $G$.
\end{Lem}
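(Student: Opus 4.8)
The plan is to verify the four axioms of a proper norm for $\|\cdot\|_G$ directly, using throughout the unique representation $g = h_g + \sum_i r_i\cdot g_i$ of Lemma \ref{Representation}. It is convenient to set $\sigma(\pi(g)) := \sum_i r_i\cdot g_i$, the canonical balanced-digit lift of $\pi(g)\in G/H$, so that $\|g\|_G = \|h_g\|_H + \|\pi(g)\|_{G/H}$ and $h_g = g - \sigma(\pi(g))$. Axioms (1), (2) and (4) are routine. For (1) both summands are non-negative, so $\|g\|_G=0$ forces $h_g=0$ and $\pi(g)=0$; by uniqueness of the representation all $r_i=0$, whence $g=0$. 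For (2) one checks that $-g = (-h_g) + \sum_i(-r_i)\cdot g_i$ is again the canonical representation (since $-r_i\in[-k_i,k_i]$), so passing from $g$ to $-g$ replaces $h_g$ by $-h_g$ and $\pi(g)$ by $-\pi(g)$; both $\|\cdot\|_H$ and the ultrametric $\|\cdot\|_{G/H}$ are symmetric, the latter because $-x$ and $x$ lie in the same $G_i'\setminus G_{i-1}'$. For (4), $\|g\|_G\le K$ forces $\|h_g\|_H\le K$ and $\|\pi(g)\|_{G/H}\le K$; properness of $\|\cdot\|_H$ and of $d_{G/H}$ leaves finitely many choices for $h_g$ and for $\pi(g)$, and since $\pi(g)$ determines the digits $r_i$ uniquely (Lemma \ref{Representation} applied to the odd pair $(G/H,\{0\})$ with chain $\LL'$), the pair $(h_g,\pi(g))$ determines $g$. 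Hence $\{g:\|g\|_G\le K\}$ is finite.

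The whole weight of the lemma lies in the triangle inequality (3). Writing $x=\pi(g)$, $y=\pi(g')$ and using $h_{g+g'} = h_g + h_{g'} + \delta(x,y)$, where the $H$-valued defect is
\[ \delta(x,y) := \sigma(x) + \sigma(y) - \sigma(x+y), \]
together with the strong triangle inequality $\|x+y\|_{G/H}\le\max(\|x\|_{G/H},\|y\|_{G/H})$ of the ultrametric, the desired bound $\|g+g'\|_G\le\|g\|_G+\|g'\|_G$ reduces to the single estimate
\[ \|\delta(x,y)\|_H \le \|x\|_{G/H} + \|y\|_{G/H} - \|x+y\|_{G/H}. \]
I would establish this by a carrying argument. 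Adding the balanced-digit expansions of $x$ and $y$ and reducing from the top level downwards, each coefficient $r_i+s_i$ (augmented by carries descending from higher levels) is rewritten as $u_i + \epsilon_i m_i$ with $u_i\in[-k_i,k_i]$; since $m_i\cdot g_i\in G_{i-1}$ equals $h_{m_i g_i}$ plus lower generators, the term $\epsilon_i m_i g_i$ deposits $\epsilon_i h_{m_i g_i}$ into $H$ and pushes the remainder further down. Collecting the $H$-contributions yields $\delta(x,y)=\sum_i \epsilon_i\, h_{m_i g_i}$, so that $\|\delta(x,y)\|_H \le \sum_i |\epsilon_i|\cdot\|h_{m_i g_i}\|_H$.

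The main obstacle is controlling the carries $\epsilon_i$. Let $n$ be the highest level at which a nonzero carry is produced. No carry can appear above $\min(a,b)$ when $\|x\|_{G/H}=K_a\neq K_b=\|y\|_{G/H}$ (the higher levels carry only the digits of the larger element, already in range), while $n\le a$ when $\|x\|_{G/H}=\|y\|_{G/H}=K_a$. The natural bound to prove by downward induction on $i$, from $n$ to $1$, is $|\epsilon_i|\le n-i+1$, each higher level being able to feed one unit of carry toward level $i$; this is exactly the point at which the hypothesis $K_n \ge \sum_{i=1}^n (n-i+1)\,\|h_{m_i g_i}\|_H$ enters, giving $\|\delta(x,y)\|_H \le \sum_{i=1}^n (n-i+1)\,\|h_{m_i g_i}\|_H \le K_n$. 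Comparing with the ultrametric then finishes the estimate: if $\|x\|_{G/H}\neq\|y\|_{G/H}$ then $\|x+y\|_{G/H}=\max$, so the right-hand side equals the smaller norm, which is $\ge K_n$; and if $\|x\|_{G/H}=\|y\|_{G/H}=K_a$ then $\|x+y\|_{G/H}=K_c$ with $c\le a$, so the right-hand side is $2K_a-K_c\ge K_a\ge K_n$. Verifying the carry bound $|\epsilon_i|\le n-i+1$ rigorously—that is, tracking precisely how the remainders $m_i g_i$ redistribute mass across all lower levels during the cascade—is the delicate step, and it is exactly what the weights $(n-i+1)$ in the hypothesis on $\{K_n\}$ are designed to absorb.
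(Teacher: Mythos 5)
Your proposal is correct and follows essentially the same route as the paper: both reduce the triangle inequality to bounding the $H$-valued defect produced when two balanced-digit expansions are added, both bound the carry at level $i$ by (top level $-\,i+1$) so that the hypothesis $K_n \ge \sum_{i=1}^n (n-i+1)\|h_{m_i\cdot g_i}\|_H$ absorbs the defect, and both then compare with the ultrametric on $G/H$. The carrying estimate you flag as delicate is exactly the step the paper also asserts without detailed verification (its $h_z = \sum s_i\, h_{m_i\cdot g_i}$ with $|s_i| < j-i+1$), so the two arguments match in both substance and level of rigor.
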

\begin{proof}
We have to check the three conditions of a norm plus the proper condition. Clearly $\|g\|_G = 0$ if and only if $g = 0$ and by Lemma \ref{Representation} we obtain
$-g = -h_g + \sum_{i=1}^{\infty} (-r_i)\cdot g_i$ and as $\|\cdot\|_H$ and $\|\cdot\|_{G/H}$ are norms then $\|-g\|_G = \|g\|_G$.
We will prove the subadditivity by induction showing that the restriction of $\|\cdot\|_G$ to $G_n$ satisfies
$\|x+y\|_{G_n} \le \|x\|_{G_n} + \|y\|_{G_n}$. The case $n = 0$ is trivial as $\|\dot\|_H$ is a norm.
Now suppose the subadditivity true for every group $G_i$ with $i <n$ and let $x, y\in G_n$.
Assume $x \in G_n \setminus G_{n-1}$. By Lemma \ref{Representation} we get:
\[x = h_x + \sum_{i = 1}^n r_i \cdot g_i\] with $r_n \ne 0$
\[y = h_y + \sum_{i=1}^n r'_i \cdot g_i.\]
Let $j$ be the maximum $i$ such that $r'_i \ne 0$. If $j = n$ and $r'_j = -r_n$ the result is a consequence of the induction hypothesis. 
So we assume $j< n$ or $r_j'\ne -r_n$ then:
\[x + y = h_x + h_y + \sum_{i = 1}^j (r_i + r'_i) \cdot g_i + \sum_{i = j+1}^n r_i\cdot g_i\]
With the last term possibly $0$ by the convention $\sum_{i=k}^m x_i = 0$ if $k >m$. \par
Let $h_z$ be the element of $H$ from the representation of $z = \sum_{i = 1}^j (r_i + r'_i) \cdot g_i$. Notice that
for $h_z$ there exists $\{s_i\}_{i= 1}^j$ such that $h_z$ is of the form $h_z = \sum_{i = 1}^j s_i \cdot h_{m_i \cdot g_i}$ and $|s_i| < j-i+1$. It yields to:
\[\|x+ y\|_G = \|h_x + h_y + h_z\|_H + K_n \le \|h_x\|_H + \|h_y\|_H + \|h_z\|_H + K_n \le \]
\[\le \|h_x\|_H + \|h_y\|_H + \sum_{i= 1}^{j} (j-i+1)\cdot\| h_{m_i \cdot g_i}\|_H + K_n \le  \|h_x\|_H + \|h_y\|_H + K_j + K_n\]
The last term is  equal to $\|x\|_G + \|y\|_G$. So it is a norm. \par
Properness is an easy consequence of the fact that the norms $\|\cdot\|_H$ and $\|\cdot\|_{G/H}$ are proper.
\end{proof}

The corresponding metric $d_G$ of the previous lemma will be called the {\it{pseudo-ultrametric generated by $(d_H, \LL)$}}.

\begin{Lem}\label{OddPairsAreBiuniform} Let $(G, H)$, $(G', H')$ be two odd pairs such that
$H$ and $H'$ are isomorphic. Let $\|\cdot\|_H$ be a proper norm of
$H$ and let $\|\cdot\|_{H'}$ be the induced norm in $H'$ via the
isomorphism. If there exist two one step ascending chains $ \LL =\{G_0 < G_1
<...\}$ and $\LL'=\{G'_0 < G'_1<...\}$ associated to $H$ and $H'$
respectively such that the sequence of indexes are equal then $(G,
d_G)$ and $(G', d_{G'})$ are coarsely equivalent with $d_G$
and $d_{G'}$ the pseudo-ultramentrics generated by  $(d_H, \LL)$
and $(d_{H'}, \LL')$ respectively. Moreover the coarse equivalence can be chosen to be biyective. 
\end{Lem}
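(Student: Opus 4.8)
The plan is to build an explicit bijection $F\colon G \to G'$ out of the canonical representations of Lemma \ref{Representation} and to verify that both $F$ and $F^{-1}$ are coarse maps; since $F$ will be a bijection, this automatically makes it a coarse equivalence (with coarse inverse $F^{-1}$ and both displacement constants equal to zero). Let $\phi\colon H \to H'$ denote the given isomorphism, which by hypothesis is an isometry for $\|\cdot\|_H$ and the induced norm $\|\cdot\|_{H'}$. Writing each $g \in G$ uniquely as $g = h_g + \sum_{i=1}^\infty r_i g_i$ by Lemma \ref{Representation} for the odd pair $(G,H)$, I would define
\[ F\Big(h_g + \sum_{i=1}^\infty r_i g_i\Big) = \phi(h_g) + \sum_{i=1}^\infty r_i g'_i. \]
Because the two index sequences coincide, the admissible ranges $r_i \in [-k_i,k_i]\cap\ZZ$ are the same on both sides, so by the uniqueness in Lemma \ref{Representation} applied to $(G',H')$ this is a well-defined bijection whose inverse sends $h' + \sum r_i g'_i$ to $\phi^{-1}(h') + \sum r_i g_i$.

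The engine of the proof is a clean formula for the pseudo-ultrametric in terms of the representation. Given $x = h_x + \sum r_i g_i$ and $y = h_y + \sum r'_i g_i$, I would first observe that $\pi(y-x) = \sum_i (r'_i - r_i)\,\bar g_i$ in $G/H$ has top nonzero coefficient exactly at $n := \max\{i : r_i \ne r'_i\}$, and since $|r'_n - r_n| \le 2k_n < m_n$ this coefficient is nonzero modulo $m_n$; hence $\pi(y-x)$ lies in level $n$ of the derived chain and the quotient ultrametric gives $\|\pi(y-x)\|_{G/H} = K_n$. Combined with the definition of $\|\cdot\|_G$ in Lemma \ref{NormsAreNorms}, this yields
\[ d_G(x,y) = \|h_{y-x}\|_H + K_n, \qquad n = \max\{i : r_i \ne r'_i\}. \]
The identical computation in $G'$, using that $F(y)-F(x)$ has the same skeleton coefficients $r'_i - r_i$, gives $d_{G'}(F(x),F(y)) = \|h'_{F(y)-F(x)}\|_{H'} + K'_n$ with the very same index $n$.

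To check that $F$ is coarse, suppose $d_G(x,y) \le \delta$. Since the $K_n$ are increasing and tend to infinity, the bound $K_n \le \delta$ forces $n \le N$ for some $N = N(\delta)$, and simultaneously $\|h_{y-x}\|_H \le \delta$. It then remains to bound the quotient contribution $K'_n \le K'_N$ and the subgroup contribution $\|h'_{F(y)-F(x)}\|_{H'}$. For the latter I would reduce $\sum_{i\le n}(r'_i - r_i)g'_i$ to canonical form in $G'$, calling its $H'$-part $h^*$, and likewise write $\sum_{i\le n}(r'_i - r_i)g_i = \tilde h + (\text{canonical skeleton})$ in $G$; comparing the $H$-parts of $y-x$ then gives the identity $h'_{F(y)-F(x)} = \phi(h_{y-x}) - \phi(\tilde h) + h^*$, whence $\|h'_{F(y)-F(x)}\|_{H'} \le \delta + \|\tilde h\|_H + \|h^*\|_{H'}$. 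The crucial point is that $\tilde h$ and $h^*$ depend only on a skeleton of degree at most $N$ with coefficients in $[-2k_i,2k_i]$, and there are only finitely many such skeletons; hence $\|\tilde h\|_H$ and $\|h^*\|_{H'}$ are bounded by a constant $C(N)$ depending on $\delta$ alone. This produces $\epsilon(\delta) := \delta + C(N) + K'_N$ with $d_{G'}(F(x),F(y)) \le \epsilon(\delta)$, so $F$ is coarse.

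Finally, the whole argument is symmetric under interchanging $(G,H)$ with $(G',H')$ and $\phi$ with $\phi^{-1}$, so $F^{-1}$ is coarse as well; together with $F^{-1}\circ F = \id_G$ and $F \circ F^{-1} = \id_{G'}$ this exhibits $F$ as the desired bijective coarse equivalence. I expect the main obstacle to be the third step: because $F$ is not a homomorphism, $F(y)-F(x)$ need not equal $F(y-x)$, so one must control the $H'$-part produced when the difference of skeletons is re-reduced to canonical form using the relations $m_i g'_i \in G'_{i-1}$. The cleanest route past this is the finiteness observation above, which bounds those parts uniformly in $\delta$ without any explicit accounting of the carrying cascade.
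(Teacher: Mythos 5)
Your proposal is correct and follows essentially the same route as the paper: the same bijection $g=h_g+\sum r_ig_i\mapsto \phi(h_g)+\sum r_ig_i'$, the same reduction of $d_G(x,y)$ to $\|h_{y-x}\|_H+K_n$ via the canonical representation, and the same add-and-subtract comparison of the $H$- and $H'$-parts. The only (harmless) divergence is in bounding the correction terms from re-reducing the skeleton difference: you invoke finiteness of the possible skeletons of degree at most $N(\delta)$, whereas the paper tracks the carrying cascade explicitly and bounds it by $\sum_{i\le i_0}(i_0-i+1)\|h_{m_i\cdot g_i}\|_H$.
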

\begin{proof}
Let  $r: H \to H'$ be an isomorphism between $H$ and $H'$.
We will use Lemma \ref{Representation}. Let $\{g_i\}_{i\in \NN}$ and $\{g'_i\}_{i\in \NN}$ be two sequence of generators of $\LL$ and
$\LL'$ and let $\{K_n\}_{n\in\NN}$, $\{K'_n\}_{n\in\NN}$ be the sequences of numbers used to generate the ultrametrics $d_{G/H}$ and $d_{G'/H}$.
Given $g\in G$ with $g = h_g + \sum_{i = 1}^{\infty} r_i \cdot g_i$ we build the map $f: G \to G'$ defined by $f(g) = r(h_g) + \sum_{i= 1}^{\infty} r_i \cdot g'_i$.
It is clearly bijective. Let us show it is a coarse map.  \par Given $K >0$.
we have to prove that there exist a $C_K$ such that $d_G(x, y) \le K$ implies  $d_{G'}(f(x),
f(y)) \le C_K$ for every $x, y \in G$. Firstly let $i_K$ be the maximum index $i$
such that $K_i\le K$. Let $\{h_i\}_{i=1}^{i_K}$ be the finite set of elements of $G$ with $h_i = h_{m_i\cdot g_i}$ and let $\{h'_i\}_{i=1}^{i_K}$ be the analoguos set
in $G'$ i.e. $h'_i = h'_{m_i\cdot g'_i}$. Define the constant:
\[C_K = K + K'_{i_K}+ \sum_{i = 1}^{i_K} (i_K-i+1) \cdot \|h'_i\|_{H'}+ \sum_{i = 1}^{i_K} (i_K-i+1) \cdot \|h_i\|_H\]
Let us show that this constant satisfies the coarse conditon.
 Given
$x = h_x + \sum_{i=1}^{\infty} p_i \cdot g_i$ and $y = h_y +
\sum_{i=1}^{\infty} q_i \cdot g_i$ with \[d_G(x, y) = \|h_y-
h_x + \sum_{i=1}^{\infty} (q_i-p_i) \cdot g_i\|_G \le K\]
There exists a minimum index $i_0$ such that $q_i-p_i = 0$
for every $i>i_0$. Notice $i_0 \le i_K$. As in the previous proof we have that there is some $h_z$ such that:
\[d_G(x,y) = \|h_y - h_x + h_z\|_H + K_{i_0}\le K \text{, and }  h_z = \sum_{i = 1}^{i_0} s_i \cdot h_i \text{ with } |s_i| < i_0-i+1. \]
Analogously in $G'$:
\[d_{G'}(f(x), f(y)) = \|r(h_y)-r(h_x)+ h'_z\|_{H'} + K'_{i_0} \text{,  } h'_z = \sum_{i = 1}^{i_0} s'_i \cdot h'_i \text{ with }|s'_i| < i_0-i+1.\]
Now we get:
\[ \|r(h_y)-r(h_x)+ h'_z + r(h_z) -r(h_z)\|_{H'} + K'_{i_0} \le  K + \|h'_z\|_{H'} + \|r(h_z)\|_{H'}+ K'_{i_K} \le \]
\[\le K'_{i_K} +  K+ \sum_{i = 1}^{i_0} (i_0-i+1) \cdot \|h'_i\|_H'+\sum_{i = 1}^{i_0} (i_0-i+1) \cdot \|h_i\|_H \le C_K \]
And $f$ is a coarse map. By symmetry in the reasoning the inverse
$f^{-1}$ of $f$ given by $f^{-1}(h'_g + \sum_{i=1}^{\infty} r_i\cdot g'_i) =
r^{-1}(h'_g) + \sum_{i=1}^{\infty}r_i \cdot g_i$ will be a coarse
map and hence $(G, d_G)$ and $(G', d_{G'})$ will be coarsely equivalent with $f$ a biyection.
\end{proof}
Next definition suggests an coarse analogous to the algebraic notion of splitting in exact sequences.

\begin{Def}\label{CoarselySplit}
Let $K, G, H$ be groups. Given an exact sequence by:
\[1\to
K \overset{i}{\rightarrow} G \overset{\pi}{\to} H\to 1\] We will
say that such sequence is {\it{coarsely split}} if there exists proper left invariant metrics $d_K, d_G$ and $d_H$ and a coarse
equivalence $f: (G, d_G) \to (K\oplus H, d_K \oplus d_H)$ such
that the distance between $f\circ i$  and $id|_H$ and the distance
between $\pi' \circ f$ and $\pi$ are bounded with $\pi': K\oplus H
\to H$ defined as the natural projection. The metric $d_K\oplus d_H$ is defined as the usual sum metric i.e. the $l_1$-metric of a product of two spaces.
\end{Def}

\begin{Thm}\label{OddPairsSplitsCoarsely}
Let $(G, H)$ be an odd pair.  The exact sequence:
\[0\to
H \overset{i}{\rightarrow} G \overset{\pi}{\to} G/H\to 0\] is
coarsely split.
\end{Thm}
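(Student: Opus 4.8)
The plan is to realize this algebraically non-split quotient as a \emph{coarse} direct product by comparing the given odd pair $(G,H)$ with the genuinely split odd pair $(H\oplus G/H,\,H\oplus 0)$, and then invoking Lemma \ref{OddPairsAreBiuniform}. First I would record that $G/H$ is a countable abelian torsion group all of whose elements have odd order, hence locally finite. By Lemma \ref{AbelianCanHavePrime} it carries a one-step ascending chain $\LL' = \{H/H = G'_0 < G'_1 < \dots\}$ with generators $\{g'_i\}$ and sequence of indexes $\{m_i\}$, which are odd (each $m_i$ divides the order of $g'_{i+1}$). Lifting through $\pi$ gives the associated chain $\LL = \{H = G_0 < G_1 < \dots\}$ of $G$ with generators $g_i\in\pi^{-1}(g'_i)$ and the same indexes $m_i$. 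Fix any proper norm $\|\cdot\|_H$ on $H$ and an increasing sequence $\{K_n\}$ with $K_n \ge \sum_{i=1}^n (n-i+1)\|h_{m_i\cdot g_i}\|_H$; Lemma \ref{NormsAreNorms} then produces the pseudo-ultrametric $d_G$ on $G$, together with the ultrametric $d_{G/H}$ on $G/H$ generated by $\LL'$ and $\{K_n\}$.

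Next I would equip the split group $H\oplus G/H$ with the odd-pair structure given by the subgroup $H\oplus 0$ and the one-step chain $\LL''$ derived from $\LL'$ with generators $(0,g'_i)$ and the same indexes $m_i$. The key computation is that $m_i\cdot(0,g'_i) = (0,m_i g'_i)$ lies in $H\oplus G'_{i-1}$, so its $H$-part vanishes: $h_{m_i\cdot(0,g'_i)} = 0$. Hence the hypothesis of Lemma \ref{NormsAreNorms} is satisfied for the very same sequence $\{K_n\}$, and the pseudo-ultrametric it produces is, via the representation of Lemma \ref{Representation},
\[
\|(h,x)\|_{H\oplus G/H} = \|h\|_H + \|x\|_{G/H},
\]
which is exactly the sum metric $d_H\oplus d_{G/H}$. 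Thus the two odd pairs $(G,H)$ and $(H\oplus G/H,\,H\oplus 0)$ have equal kernel $H$ and identical index sequence $\{m_i\}$, so Lemma \ref{OddPairsAreBiuniform} (with $r=\id|_H$) supplies a bijective coarse equivalence $f\colon (G,d_G)\to (H\oplus G/H, d_H\oplus d_{G/H})$. Reading off its formula through the representation $g = h_g + \sum_i r_i g_i$ gives $f(g) = (h_g,\pi(g))$, since $\pi(g) = \sum_i r_i g'_i$.

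Finally I would check compatibility with the exact sequence. For $h\in H$ the representation has $h_h = h$ and all $r_i = 0$, so $f(i(h)) = (h,0)$ and $f\circ i$ coincides with the inclusion $\id|_H$ of $H$ into the first factor exactly; moreover $\pi'\circ f(g) = \pi'(h_g,\pi(g)) = \pi(g)$, so $\pi'\circ f = \pi$ on the nose. Both required distances are therefore $0$, and the sequence is coarsely split.

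The heart of the argument is the middle step: confirming that passing to the split pair annihilates the $H$-corrections $h_{m_i\cdot g_i}$, so that the pseudo-ultrametric of Lemma \ref{NormsAreNorms} degenerates into the genuine product metric $d_H\oplus d_{G/H}$. Once this is in place the theorem is pure assembly of the preceding lemmas, and the strong conclusion (both comparison maps agree \emph{exactly}, not merely up to bounded distance) falls out for free. I expect no real obstacle beyond verifying this vanishing, which is immediate from the uniqueness in Lemma \ref{Representation}.
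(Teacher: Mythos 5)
Your proposal is correct and follows essentially the same route as the paper: construct the pseudo-ultrametric $d_G$ from $(d_H,\LL)$ via Lemma \ref{NormsAreNorms} and apply Lemma \ref{OddPairsAreBiuniform} to conclude that $f(g)=(h_g,\pi(g))$ is a coarse equivalence compatible with $i$ and $\pi$. The only difference is that you make explicit what the paper leaves implicit, namely that the second odd pair is $(H\oplus G/H,\,H\oplus 0)$ and that its correction terms $h_{m_i\cdot(0,g'_i)}$ vanish, so its pseudo-ultrametric is exactly $d_H\oplus d_{G/H}$; this is a useful clarification rather than a new argument.
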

\begin{proof}
Let $d_H$ be any proper left invariant metric of $H$ and let $\LL$ be a one-step ascending chain of $G/H$.
Take in $d_G$ the pseudo-ultrametric generated by $d_H$ and $\LL$. Suppose that $\{K_i\}_{i \in \NN}$ are the numbers used to generate $d_G$ as in \ref{NormsAreNorms}.
Let $d_{G/H}$ be a  metric of  $G/H$  generated by $\LL$ and $\{K_i\}_{i\in \NN}$. By \ref{OddPairsAreBiuniform} we get that the map 
$f: (G, d_G) \to (H\oplus G/H, d_H \oplus d_{G/H})$ defined by $f(g) = (h_g, \pi(g))$ is a coarse equivalence that satisfies $f\circ i = id|_{H}$ and $\pi'\circ f = \pi$.
\end{proof}
\begin{Rem} The metric $d_{G/H}$ is an ultrametric.
\end{Rem}
\section{Even pairs}\label{Even}
In this section we study coarse splitting of even pairs. The most easy case of even pair is $(\Zmed, \Ztwo)$ with the groups defined as 
$\Zmed = \{\frac{m}{2^k} \text{,  }m \in \ZZ \text{ and } 
k\in \NN\cup \{0\} \}$ and $\ZZ(2^{\infty})$  the direct limit of $\ZZ_2 \to \ZZ_4 \to ...\to \ZZ_{2^i}\to...$. It is clear that $\Ztwo$ is isomorphic to 
$\Zmed/\ZZ$.

\begin{Lem}\label{ZmedSplitsCoarsely} The exact sequence:
\[0 \to \ZZ \to \Zmed \to \ZZ(2^{\infty}) \to 0\]
is coarsely split. Moreover $d_{\ZZ}$ can be chosen as the natural word metric.
\end{Lem}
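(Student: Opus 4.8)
The plan is to write down the three metrics explicitly and to realize the splitting map $f$ as a bijective near-isometry, so that both boundedness requirements of Definition \ref{CoarselySplit} hold with constant $0$. Since $\ZZ(2^\infty)$ is locally finite, the one-step ascending chain $G_i' = \la 1/2^i + \ZZ\ra$ together with the sequence $K_i = i$ yields, via Definition \ref{OnUltrametrics}, a proper left invariant ultrametric $d_{\ZZ(2^\infty)}$ with norm $\|\cdot\|_{\ZZ(2^\infty)}$. On $\ZZ$ I take the word metric $d_\ZZ$. The decisive choice is the metric on $\Zmed$: writing $\pi: \Zmed \to \ZZ(2^\infty)$ for the projection and $|\cdot|_\RR$ for the ambient absolute value on $\Zmed \subset \RR$, I set
\[\|x\|_{\Zmed} = |x|_\RR + \|\pi(x)\|_{\ZZ(2^\infty)}, \qquad d_{\Zmed}(x,y) = \|x-y\|_{\Zmed}.\]

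First I would check that $\|\cdot\|_{\Zmed}$ is a proper norm and that $d_{\Zmed}$ is left invariant. Symmetry, subadditivity, and vanishing exactly at $0$ are immediate, because $|\cdot|_\RR$ is a norm while $\|\pi(\cdot)\|_{\ZZ(2^\infty)}$ is a seminorm with kernel $\ZZ$, so the sum vanishes only at $0$; left invariance holds since both summands depend only on $x-y$. Properness is the only point that genuinely uses the sum of the two terms: $\|x\|_{\Zmed}\le C$ forces $x \in [-C,C]$ and bounds the depth $i$ with $\pi(x) \in G_i'\setminus G_{i-1}'$, hence bounds the denominator of $x$, and a bounded interval with bounded denominators is finite. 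Next I would define $f: \Zmed \to \ZZ \oplus \ZZ(2^\infty)$ by $f(x) = (\lfloor x\rfloor, \pi(x))$. This is a set bijection, since fixing $\lfloor x\rfloor$ and the unique dyadic representative in $[0,1)$ of $\pi(x)$ recovers $x$. By construction $\pi'\circ f = \pi$, and $f(i(m)) = (m,0)$ for $m \in \ZZ$, so $f\circ i$ is exactly the inclusion $\ZZ \incl \ZZ\oplus\ZZ(2^\infty)$; both required distances vanish.

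It then remains to show $f$ is a coarse equivalence, where the estimate is short. In the $l_1$-metric $d_\ZZ \oplus d_{\ZZ(2^\infty)}$ the second coordinate of $d(f(x),f(y))$ equals $\|\pi(x)-\pi(y)\|_{\ZZ(2^\infty)}$, which is literally the second summand of $d_{\Zmed}(x,y)$. For the first coordinate, the identity $x-y = (\lfloor x\rfloor - \lfloor y\rfloor) + \bigl((x-\lfloor x\rfloor)-(y-\lfloor y\rfloor)\bigr)$, whose last bracket lies in $(-1,1)$, gives $\bigl|\,|x-y|_\RR - |\lfloor x\rfloor - \lfloor y\rfloor|\,\bigr| < 1$. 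Hence $d_{\Zmed}(x,y)$ and $d(f(x),f(y))$ differ by less than $1$ for all $x,y$; so $f$ and $f^{-1}$ are both coarse and $f$ is a coarse equivalence, proving the sequence coarsely split with $d_\ZZ$ the word metric.

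The main obstacle this argument must get around is the failure, for even pairs, of the symmetric representation of Lemma \ref{Representation}: the coset digits now range over $\{0,1\}$ rather than a symmetric block $[-k_i,k_i]$, so the digit-based norm of Lemma \ref{NormsAreNorms} is not symmetric, as $x$ and $-x$ acquire different integer parts. I expect the key idea to be exactly the replacement of the integer part by the genuinely symmetric ambient quantity $|x|_\RR$ in the $\ZZ$-direction, with properness recovered by adding the depth ultrametric; this device is special to the embedding $\Zmed \incl \RR$ and is what makes $(\Zmed,\ZZ)$ the easiest even pair.
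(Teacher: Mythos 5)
Your proof is correct, and it follows the same basic route as the paper: the splitting map is the same (integer part in the first coordinate, projection to $\Ztwo$ in the second), and the norm on $\Zmed$ is in both cases ``size of the integer part plus dyadic depth.'' The one genuine difference is your choice of $\|x\|_{\Zmed}=|x|_{\RR}+\|\pi(x)\|_{\Ztwo}$ in place of the paper's $|m_x|+i_x$, where $m_x$ is a signed integer part. Since $|m_x|=\lfloor |x|\rfloor$, the two norms differ by less than $1$ and induce the same coarse structure, but yours is a sum of the restriction of a genuine norm on $\RR$ and the pullback of an ultrametric norm along the homomorphism $\pi$, so symmetry and subadditivity are immediate; the paper instead has to run the carry analysis via the quantity $\delta_{x,y}$ to verify the triangle inequality. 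A second payoff of your version is that $f(x)=(\lfloor x\rfloor,\pi(x))$ becomes a bijective near-isometry (distortion $<1$), so you get the coarse inverse for free, whereas the paper constructs an explicit one-sided inverse $g$ and then argues that the points missed by its image lie at distance $1$ from it. Both arguments are sound; yours trades the paper's case analysis for the observation that the archimedean absolute value on $\RR$ already does the symmetrization that Lemma \ref{Representation} cannot provide for even pairs.
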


 \begin{proof}
Each element $g\in \Zmed\setminus \ZZ$ can be written uniquely as:
\[g = m_g + \frac{k_g}{2^{i_g}}\] with $i_g >1$ and $m_g \ge 0$,  $0 < k_g < 2^{i_g}$  if $g
\ge 0$ or $m_g < 0$, $-2^{i_g} < k_g < 0$ if $g<0$ and
$(k_g, 2^{i_g}) = 1$. Take the norm $\|\cdot\|_{\Zmed}$ defined
as:
\[\|g\|_{\Zmed} = |g| \text{ if } g\in \ZZ\]
\[\|g\|_{\Zmed} = |m_g| + i_g \text{ if } g \in \Zmed\setminus\ZZ \]
Let us check that it is a proper norm. Clearly it is proper and satisfies
$\|g\|_{\Zmed} = \|-g\|_{\Zmed}$ and $\|g\|_{\Zmed} = 0$ if and
only if $g =0$.. The unique
remainder property is $\|x+y\|_{\Zmed} \le \|x\|_{\Zmed} +
\|y\|_{\Zmed}$ for every $x, y \in \Zmed$. We will write $g = m_g +
\frac{k_g}{2^{i_g}}$ with $k_g = 0 $ if $g \in \ZZ$ and $i_g$ any
positive integer. Define $\delta_{x,y}$ as:
\[\delta_{x,y} =  \begin{cases} -1 \text{ if } (m_x + m_y > 0 \text{ and } 
\frac{k_x}{2^{i_x}} + \frac{k_y}{2^{i_y}} < 0) \text{ or }  (m_x + m_y < 0 \text{ and } 
\frac{k_x}{2^{i_x}} + \frac{k_y}{2^{i_y}} \le -1)\\ 0 \text{ if } (m_x + m_y \ge 0 \text{ and }1 > \frac{k_x}{2^{i_x}} + \frac{k_y}{2^{i_y}} \ge 0) \text{ or } 
( m_x + m_y \le 0 \text{ and } -1 <  \frac{k_x}{2^{i_x}} + \frac{k_y}{2^{i_y}} \le 0)\\ 1 \text { otherwise }\end{cases}   \]
In such situation for every $x, y \in
\Zmed$ we also define $i_{x,y} = \max\{|sign(k_x)|\cdot i_x, |sign(k_y)|\cdot
i_y\}$. Hence we have:
\[\|m_x + \frac{k_x}{2^{i_x}} + m_y +
\frac{k_y}{2^{i_y}}\|_{\Zmed} = |m_x + m_y + \delta_{x,y}| +
i_{x,y}\] Notice that if $k_x$ and  $k_y$ are not zero  then:
\[i_{x,y} + |\delta_{x,y}| \le i_x + i_y.\]
So the unique non trivial case is when $k_x = 0$ and $ 0 \ne sign(m_x + m_y) \ne sign(\frac{k_y}{2^{i_y}})$. 
But in such case we have $|m_x + m_y+ \delta_{x, y}| \le |m_x + m_y|$ and this implies $\|\cdot\|_{\Zmed}$ is a norm.
Now let $d_{\Ztwo}$ the ultrametric generated by the one-step ascending chain $ 1 < \ZZ_2 < \ZZ_4 <...\ZZ_{2^i}< ...$ 
and the sequence $\{i\}_{i \in \NN}$ (see remmark \ref{OnUltrametrics}). Define the surjective
map $f: (\Zmed, d_{\Zmed}) \to (\ZZ \oplus \Ztwo, D:= d_{\ZZ}\oplus d_{\Ztwo})$ as $f(x) = (m_x, \pi(x))$.   
We claim that $f$ is a coarse equivalence. Firstly we will prove it is a coarse map. 
Given $K >0$ and $x, y \in \Zmed$ suppose $\|x-y\|_{\Zmed} \le K$. Define $\delta_{x, -y}$ and $i_{x,-y}$ as above then:
\[\|x-y\|_{\Zmed} = |m_x-m_y+\delta_{x, -y}| + i_{x,-y} \le K\]
Notice that $i_{x, -y}$ is in fact $\|\pi(x)-\pi(y)\|_{\Ztwo}$ i.e. the minimum positive integer such that $\pi(x)-\pi(y) \in \Ztwo\setminus\ZZ_{2^{i_{x,-y}-1}}$ then:
\[D(f(x), f(y)) = |m_x-m_y| + i_{x, -y}\le K - |\delta_{x, -y}|\]
And the map is coarse. \par
Given $x \in \Ztwo$ we define $i(x) = \|x\|_{\Ztwo}$ then there exists an unique 
$k'(x) \in \{0, 1,..., 2^{i(x)}-1$ such that 
the class $[k'(x)]$ of $k'(x)$ in $\Ztwo$ is $x$. Take $g:  (\ZZ \oplus \Ztwo, D) \to  (\Zmed, d_{\Zmed}$ as the map:
\[g((m, x) = \begin{cases} m + \frac{k(x)}{2^{i(x)}} \text{ if } m \ge 0\\ m + \frac{-2^{i(x)}+ k(x)}{2^{i(x)}} \text{ if } m < 0 \end{cases}\]
This map is injective and applying a similar reasoning as before we get it is  coarse. 
Also we have $f\circ g = id$ and if we restrict $f$ to $g(\ZZ \oplus \Ztwo)$ we obtain $g\circ f = id$. 
The unique points not in the image of $g$ are those $x$ such that $x \in (-1, 0)$. Let $x = -\frac{k_x}{2^{i_x}}$ be one of such points. 
Take the element $ y = \frac{2^{i_x}- k_x}{2^{i_x}}$ that is in the image of $g$ and we get $d_{\Zmed} (x, y) = \|x-y\|_{\Zmed} = 1$. 
Hence $g$ is a coarse inverse of $f$. Finally by contruction we get that $f\circ i = id|_{\ZZ}$ and $\pi'\circ f = \pi$.
\end{proof}

Combining this lemma and theorem \ref{OddPairsSplitsCoarsely} we get the following example that shows the coarse geometry of $\QQ$. In \cite{Smith} it was asked about 
the geometric structure of $\QQ$. 

\begin{Ex} $\QQ$ is coarsely equivalent to $\ZZ \oplus \QQ/\ZZ$. The reason is the following. 
Firstly we have $(\QQ, \Zmed)$ is an odd pair. Hence $\QQ$ is coarsely equivalent 
to $\Zmed \oplus \QQ/\Zmed$. Now by previous lemma $\Zmed$ is coarsely equivalent to $\ZZ \oplus \Ztwo$. Finally we have that $(\QQ/\ZZ, \Ztwo)$ is and odd pair and then 
$\Ztwo \oplus \QQ/\ZZ$ is coarsely equivalent to $\QQ/\ZZ$. Notice that by corollary \ref{TorsionAbelian} $\QQ/\ZZ$ is coarsely equivalent to any locally finite group.
\end{Ex}

\begin{Cor}\label{ZmedPowerNSplitsCoarsely}
The exact sequence
$$0 \to \bigoplus_{i=1}^K \ZZ \overset{i}{\to} \bigoplus_{i=1}^K \Zmed \overset{\pi}{\to} \bigoplus_{i=1}^K \Ztwo \to 0$$
is coarsely split if $K$ is finite.
\end{Cor}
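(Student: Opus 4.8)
The plan is to build the splitting by taking the $\ell_1$-product of $K$ copies of the coarse equivalence produced in Lemma \ref{ZmedSplitsCoarsely}. For each $i = 1, \dots, K$ let $\|\cdot\|_{\Zmed}$ be the proper norm on the $i$-th copy of $\Zmed$ used in that lemma, let $f_i\colon (\Zmed, d_{\Zmed}) \to (\ZZ \oplus \Ztwo, d_\ZZ \oplus d_{\Ztwo})$ be the coarse equivalence constructed there, and let $f_i'$ be its coarse inverse. On $\bigoplus_{i=1}^K \Zmed$ I would put the $\ell_1$-sum metric $d_G = \sum_{i=1}^K d_{\Zmed}$, and similarly put $\ell_1$-sum metrics on $\bigoplus_{i=1}^K \ZZ$ and on $\bigoplus_{i=1}^K \Ztwo$. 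Since a finite sum of proper left invariant metrics is again proper and left invariant, these are legitimate choices.

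The key step is to observe that a finite $\ell_1$-product of coarse equivalences is a coarse equivalence. Define $F = \bigoplus_{i=1}^K f_i$ from $(\bigoplus_{i=1}^K \Zmed, d_G)$ to $\bigoplus_{i=1}^K (\ZZ \oplus \Ztwo)$ with its $\ell_1$-sum metric. Given $\delta > 0$ and $x, y$ with $d_G(x,y) = \sum_i d_{\Zmed}(x_i, y_i) \le \delta$, each summand satisfies $d_{\Zmed}(x_i, y_i) \le \delta$, so coarseness of $f_i$ yields a bound $C_i(\delta)$ on the $i$-th target distance; summing gives the control $\sum_i C_i(\delta)$ for $F$. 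The same estimate applied to $F' = \bigoplus_i f_i'$ shows $F'$ is coarse, and the componentwise bounds on $f_i \circ f_i'$ and $f_i' \circ f_i$ assemble, again because the sum is finite, into the two global bounds required of a coarse equivalence and its inverse. Hence $F$ is a coarse equivalence.

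Finally I would reorganize the target. The space $\bigoplus_{i=1}^K (\ZZ \oplus \Ztwo)$ with the $\ell_1$-metric is isometric, by permuting coordinates, to $(\bigoplus_{i=1}^K \ZZ) \oplus (\bigoplus_{i=1}^K \Ztwo)$ with its $\ell_1$-metric; call this isometry $\sigma$. Then $\sigma \circ F$ is the desired coarse equivalence. Because each $f_i$ satisfies $f_i \circ i = \id|_{\ZZ}$ and $\pi' \circ f_i = \pi$, assembling coordinates shows that $\sigma \circ F$ precomposed with the inclusion $\bigoplus_{i=1}^K \ZZ \to \bigoplus_{i=1}^K \Zmed$ equals the natural inclusion into the direct sum, and that the natural projection onto $\bigoplus_{i=1}^K \Ztwo$ postcomposed with $\sigma \circ F$ equals $\pi$. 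This verifies Definition \ref{CoarselySplit}.

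The argument has no deep obstacle; the one point that genuinely uses the hypothesis is properness of $d_G$. A sum of proper metrics over infinitely many copies would fail the proper condition, since each coordinate position contributes a fresh element of bounded norm, and infinitely many such positions give infinitely many elements within a fixed radius. Thus finiteness of $K$ is precisely what is needed, both for $d_G$ to be a proper metric and for the finite sums of control functions above to be meaningful.
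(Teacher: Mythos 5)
Your proposal is correct and is essentially identical to the paper's own proof: the author likewise takes the componentwise product $F(x_1,\dots,x_K)=(f(x_1),\dots,f(x_K))$ of the coarse equivalence $f$ from Lemma \ref{ZmedSplitsCoarsely}, endows everything with $\ell_1$-sum metrics, and notes that finiteness of $K$ makes this a coarse equivalence satisfying Definition \ref{CoarselySplit}. You merely spell out the details (the summed control functions, the coordinate permutation, and the role of finiteness for properness) that the paper leaves as ``not hard to check.''
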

\begin{proof}
If $f: (\Zmed, d_{\Zmed})\to (\ZZ\oplus \Ztwo, d_{\ZZ}\oplus
d_{\Ztwo})$ is the coarse equivalence defined in
\ref{ZmedSplitsCoarsely} then as $K$ is finite is not hard to
check that the map $F: (\bigoplus_{i=1}^K\Zmed, d_{\Zmed}^n) \to
(\bigoplus_{i=1}^K (\ZZ\oplus \Ztwo), \bigoplus_{i=1}^K
(d_{\ZZ}\oplus d_{\Ztwo})$ defined as the product map $F(x_1,
...,x_k) = (f(x_1),...,f(x_k))$ is a coarse equivalence that
satisfies the conditions of the definition of \ref{CoarselySplit}.
\end{proof}
To solve the even case we will try to generalize the proccess we have done for $\QQ$ to all countable abelian groups.
For such goal we will use the notion of
$2$-divisibility and the idea that an even pair admits a nice
embedding in a direct sum of copies of $\Zmed$  and
$\ZZ(2^{\infty})$ i.e. a $2$-divisible group. 
Then the main theorem of this theorem will be deduced from the previous corollary.

\begin{Def} An abelian group $G$ is said to be $2$-divisible if the equation
 $2\cdot x = a $ has a solution $x$ for every $a \in G$.\par
We will say that an abelian  group $G$ is {\it{even-by-quotient}} if $(G, H)$ is an even pair with $H$ the subgroup generated by a
maximal free independent system.
\end{Def}
\begin{Rem}
In an even-by-quotient group the order of any element in the torsion is a power of $2$.
\end{Rem}
Next algebraic result shows that each even by quotient group admit an isomorphic embedding into a $2$-divisible group. Such result could be consider a particular 
case of a more general case: each abelian group can be embedded isomorphically in a divisible group (see \cite{Fuchs}). We will give a complete proof 
as it is not clear how to get 
the second part of next proposition from the theorem of Fuchs.\par 
Recall that the {\it socle} of a group is the subgroup in which the orders of its elements are free-square.

\begin{Prop}\label{EvenCanBeEmbedded}
Let $G$ be an even-by-quotient group and let $r_0(G)$ and $r_2(G)$ be the torsion free rank and the $2$-rank then there exists a monomorphism 
$f$ of $G$ into
$\bigoplus_{i=1}^{r_0(G)} \Zmed \oplus \bigoplus_{i= 1}^{r_2(G)}\ZZ(2^{\infty})$. 
Moreover given $L = \{g_i\}_{i=1}^{r_0(G)} \cup \{h_i\}_{i=1}^{r_2(G)}$ a maximal independent system of $G$ with $h_i$ contained 
in the socle then we can find $f$ such that $f(g_i) = (0,...0, 1, 0,..., 0,...)$ and $f(h_i) = (0,...,0, 0,..., 1, 0,...)$. \end{Prop}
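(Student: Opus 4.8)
The plan is to split $G$ along its torsion subgroup and to handle the two kinds of summands by different mechanisms: the non-divisible summands $\Zmed$ via the torsion-free quotient, where I have direct control, and the divisible summands $\Ztwo$ via injectivity. Write $T$ for the torsion subgroup of $G$; by the remark above $T$ is a $2$-group, and since $H=\la g_1,\dots,g_{r_0(G)}\ra$ is free we have $H\cap T=0$, so the subgroup $H+T$ splits as $H\oplus T$. I first record two structural facts coming from the hypothesis that $(G,H)$ is an even pair. The images $\bar g_i$ of the $g_i$ in $G/T$ generate a free subgroup $\bar H\cong\ZZ^{r_0(G)}$ of full rank, and $(G/T)/\bar H\cong G/(H+T)$ is a quotient of the $2$-group $G/H$, hence again a $2$-group. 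Moreover the order-$2$ elements $h_1,\dots,h_{r_2(G)}$, being independent and contained in the socle, form a basis of the socle $T[2]$: they are linearly independent over the two-element field, and any socle element independent from them would enlarge $L$, contradicting maximality.

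Next I construct the map into the $\Zmed$-summands. The group $G/T$ is torsion free of rank $r_0(G)$, so it embeds in its divisible hull $\QQ^{r_0(G)}$, and I choose this embedding so that $\bar g_i\mapsto e_i$, whence $\bar H$ maps onto $\ZZ^{r_0(G)}$. Since every element $\bar x\in G/T$ satisfies $2^k\bar x\in\bar H$ for some $k$ (because $(G/T)/\bar H$ is a $2$-group), its image lies in $\tfrac{1}{2^k}\ZZ^{r_0(G)}\subset\bigoplus_{i=1}^{r_0(G)}\Zmed$. Composing with the quotient $G\to G/T$ therefore yields a homomorphism $\Psi\colon G\to\bigoplus_{i=1}^{r_0(G)}\Zmed$ with $\Psi(g_i)=e_i$, $\Psi(h_i)=0$, and $\ker\Psi=T$.

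For the $\Ztwo$-summands I embed the torsion. The divisible hull of the $2$-group $T$ is $\bigoplus_{i=1}^{r_2(G)}\Ztwo$ (structure theorem for divisible groups, \cite{Fuchs}), and since $T$ is essential in its hull the socle of $T$ equals the socle of the hull. I decompose the hull as a direct sum of copies of $\Ztwo$ aligned with the socle basis $\{h_i\}$, so that the unique order-$2$ element of the $i$-th copy is $h_i$; as the unique order-$2$ element of $\Ztwo$ is precisely its socle generator, the resulting isomorphism gives an embedding $\phi\colon T\hookrightarrow\bigoplus_{i=1}^{r_2(G)}\Ztwo$ sending each $h_i$ to the $i$-th socle generator. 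To kill the $g_i$ I extend $\phi$ in two stages: define it on $H\oplus T$ by $\phi$ on $T$ and $0$ on each $g_i$ (legitimate because of the direct sum), then extend to all of $G$ using that $\bigoplus_{i=1}^{r_2(G)}\Ztwo$ is divisible, hence injective. This produces $\Phi\colon G\to\bigoplus_{i=1}^{r_2(G)}\Ztwo$ with $\Phi|_T=\phi$ and $\Phi(g_i)=0$.

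Finally I set $f=(\Psi,\Phi)$ into $\bigoplus_{i=1}^{r_0(G)}\Zmed\oplus\bigoplus_{i=1}^{r_2(G)}\Ztwo$. Then $f(g_i)=(e_i,0)$ and $f(h_i)=(0,\,\text{$i$-th socle generator})$, which are exactly the prescribed values, and $f$ is a monomorphism: if $f(g)=0$ then $\Psi(g)=0$ forces $g\in\ker\Psi=T$, and then $\Phi(g)=\phi(g)=0$ gives $g=0$ by injectivity of $\phi$. The one \emph{delicate} step, and the one I expect to demand the most care, is the coordinated embedding of the torsion part: one must embed $T$ into its divisible hull while simultaneously arranging that the prescribed socle basis $\{h_i\}$ lands on the standard socle generators, which is precisely what the socle-aligned decomposition of the hull into copies of $\Ztwo$ delivers. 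Everything else reduces to the splitting $H\oplus T$ and the injectivity of the divisible summand, and is routine.
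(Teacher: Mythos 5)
Your proof is correct, but it takes a genuinely different route from the paper's. The paper embeds $\la L\ra$ naturally and then extends by hand: a two-stage Zorn's lemma argument (first to $\la L, T\ra$ by induction on the order $2^k$ of torsion elements, then to all of $G$), where at each single-element extension one solves $2^m\cdot x = \bar f(h_g)$ in the target and then verifies well-definedness and injectivity directly via the delicate divisibility analysis ($2^m\mid n$, the cases $p=1$ versus $p=2^r s-1$, etc.). You instead split the target into its $\Zmed$-part and its $\Ztwo$-part and treat them by separate standard mechanisms: the map to $\bigoplus\Zmed$ factors through the torsion-free quotient $G/T$, which sits inside the divisible hull $\QQ^{r_0(G)}$ of $\bar H$ and lands in $\Zmed^{r_0(G)}$ because $(G/T)/\bar H$ is a $2$-group; the map to $\bigoplus\Ztwo$ is the inclusion of $T$ into its divisible hull (decomposed along the socle basis $\{h_i\}$, which your maximality argument correctly identifies as a basis of $T[2]$), extended to $G$ by injectivity of divisible groups after being set to zero on $H$. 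Injectivity of the assembled map then follows in two lines from $\ker\Psi=T$ and $\Phi|_T=\phi$. What your approach buys is brevity and conceptual clarity, at the price of invoking the injectivity of divisible abelian groups and the socle-aligned decomposition of a divisible $2$-group as black boxes from \cite{Fuchs}; the paper's argument is longer and more computational but essentially re-proves exactly the instances of those facts that it needs, which is in line with the author's stated reason for giving a complete proof (namely that the prescribed values on $L$ are not immediate from the general embedding theorem — a feature your construction also delivers, since you track $g_i\mapsto e_i$ and $h_i\mapsto$ the $i$-th socle generator throughout). All the supporting claims you use ($T$ is a $2$-group, $H\cap T=0$, $\bar H$ essential in $G/T$, $\{h_i\}$ spans the socle) are verified or easily verifiable, so I see no gap.
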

In the following the torsion free rank of a group will be denoted by $r_0(G)$ or $dim_{\QQ}(G\otimes \QQ)$ indistinctly. The first notation appears in 
the classic book of \cite{Fuchs}.
\begin{proof}
Let $L = \{g_i\}_{i=1}^{r_0(G)} \cup \{h_i\}_{i=1}^{r_2(G)}$ be a
maximal independent system of $G$ with $h_i$ contained in the
socle. As it is independent we get $< L> =
\bigoplus_{i=1}^{r_0(G)} \ZZ \oplus
\bigoplus_{i=1}^{r_2(G)}\ZZ_2$. Where $<L>$ means the subgroup generated by $L$. So $<L>$ can be embedded naturally
in $\bigoplus_{i=1}^{r_0(G)} \Zmed \oplus \bigoplus_{i=
1}^{r_2(G)}\ZZ(2^{\infty})$. Let $f$ be such emebedding, we want
to extend $f$ to the whole group. Firstly notice that  as $L$ is a
maximal independent system for every $g \in G \setminus <L>$ there
exists a minimum positive integer $m >0$ such that:
\[ 0\ne m\cdot g = a_1 +a_2 \text{ with } a_1 \in \bigoplus_{i=1}^{r_0(G)} \ZZ \text{ and  } a_2 \in \bigoplus_{i=1}^{r_2(G)} \ZZ_2\]
Now if there exists a prime $p \ne 2$ such that $p | m$ then the
order of the class $g + \bigoplus_{i=1}^{r_0(G)} \ZZ$ in
$G/\bigoplus_{i=1}^{r_0(G)} \ZZ$ will be $m$ if $a_2 = 0$ or
$2\cdot m$ otherwise. But this contradicts the fact that $G$ is
even by quotient. Hence $m = 2^k$ for some positive number $k >0$.\par The
extension of $f$ will be done in two steps:\par Step 1: {\it{There
exists an injective extension $\tilde{f}$ to $H  = < L, T_2>$
where $T_2$ is the torsion subgroup of $G$}}.\par We proceed by
induction on the order $2^k$ of the elements to extend $f$. It is
clear from the preceding reasoning that all the elements of $G$ of
order $2$ are in $<L>$ and then the case $k= 1$ is true. Let
$H_{k}$ be of the form $H_k = <L, M_k>$ with $M_k$ the set of all
elements of order less or equal $2^k$. Suppose defined  a function 
$f_k: H_k \to
\bigoplus_{i=1}^{r_0(G)} \Zmed \oplus \bigoplus_{i=
1}^{r_2(G)}\ZZ(2^{\infty})$  that is a monomorphism. 
We will show that
there exists a monomorphic extension $f_{k+1}: H_{k+1} \to
\bigoplus_{i=1}^{r_0(G)} \Zmed \oplus \bigoplus_{i=
1}^{r_2(G)}\ZZ(2^{\infty})$. In fact by Zorn's lemma 
it is enough to prove that if
there exists an extension ${\bar{f}}_k$ of $f_{k}$ to the subgroup
$H_{k+1}' = < H_k \cup T_{k+1}>$ with $T_{k+1}$ any
subset(possibly empty) of elements of order $2^{k+1}$ then there
exists an extension ${\tilde{f}}_k$ of ${\bar{f}}_k$ to $
<H_{k+1}' \cup \{g\}>$ with $g$ any element of $H_{k+1}\setminus
H_{k+1}'$. Let $g$ be such element. There exists a minimum $m \le
k$ and $h_g\in H_{k+1}'$ such that:
\[ 2^m \cdot g = h_g\]
Let $n>0$ be an integer such that 
$n\cdot g = h$ for some $h\in H_{k+1}'$, we claim that $2^m | n$. Clearly if such $n$ exists with $2^m \not| n$ i
t must be of the form $n = 2^l\cdot p$ with $0 \le l < m$ and $p >1$ an odd number i.e. $p = 2^r\cdot q -1$, $r>1$. In such case we would get:

\[2^{r+l}\cdot q \cdot g = 2^l \cdot g +h \]

Notice that by the minimality of $m$ it can not happen that $r+l\ge m$ but then:

\[2^m\cdot q \cdot g = 2^{m-r}\cdot g + 2^{m-r-l}\cdot h \text{ multiplying by } 2^{m-r-l}\]

this yields $2^{m-r}\in H_{k+1}'$  a contradiction with the minimality of $m$.
Now let ${\tilde{f}}_k(g) = x$ with $x$ some solution of the
equation $2^m \cdot x = {\bar{f}}(h_g)$. As any element $y$ of
$<H_{k+1}' \cup \{g\}>$ is of the form $y = n \cdot g + h$ with $h
\in H_{k+1}'$ we define ${\tilde{f}}(y) = n\cdot x +
{\bar{f}}(h)$. By the previous reasoning ${\tilde{f}}$ is well
defined an it is an extension of $\bar{f}$ and an homomorphism. To prove that ${\tilde{f}}$ is a
monomorphism it is enough to prove that if $n\cdot x +
{\bar{f}}(h) = 0$ for some $n >0$ and $h \in H_{k+1}$ then $n\cdot
g + h = 0$. Let $n = 2^l \cdot p$ with $p$ and odd number
(possibly $1$). Suppose on the contrary there exists such $n$ and
$h$ with $n\cdot g + h \ne 0$ and $n\cdot x + {\bar{f}}(h) = 0$.
As $\bar{f}$ is a monomorphism we can assume $l <m$. If $p =1$ then from
${\bar{f}}(2^m \cdot g + 2^{m-l}\cdot h ) = 0$ and the injectivity
of $\bar{f}$ we get $2^m\cdot g + 2^{m-l}\cdot h = 0$. It yields
to $2^{m-l}\cdot(2^l\cdot g + h) = 0$ and the order of $2^l \cdot
g + h$ is less or equal $2^{m-l}$. Hence $2^l \cdot g + h \in
H_{k+1}'$ a contradiction with the minimality of $m$. But if $p
>1$ then $p = 2^r\cdot s - 1$ with $1<r$. In this case we can suppose $l+r <m$ if $l+r \ge m$ then we are in the previous case. 
By
the injectivity of $\bar{f}$ and a similar reasoning as before we get that $2^{r}\cdot( s\cdot h_g -2^{m-r}\cdot g +
2^{m-l-r}\cdot h) = 0$. It implies
that the order of $s\cdot h_g -2^{m-r}\cdot g +  2^{m-r-l}\cdot h$ is
less or equal $2^{r}$ and then $2^{m-r}\cdot g \in H_{k+1}'$
a contradiction with the minimality of $m$.

Step 2: {\it{There exists an injective extension $\tilde{F}$ of $\tilde{f}$ to the whole group  $G$}}\par
The reasoning is similar to the previous one. By Zorn's Lemma it is enough to show that if there exists a monomorphic extension ${\tilde{f}}_{\lambda}$ of ${\tilde{f}}$ to
$G_{\lambda} = <H \cup M_{\lambda}>$ with $M_{\lambda}$ any subset of $G \setminus H$ then there is a monomorphic extension ${\tilde{F}}_{\lambda}$ of
${\tilde{f}}_{\lambda}$ to $G_{\lambda}' = < G_{\lambda}\cup \{g\}>$ with $g \in G \setminus G_{\lambda}$. Suppose ${\tilde{f}}_{\lambda}$, $G_{\lambda}$ and
$g \in G\setminus G_{\lambda}$ given. Let $m >0$ be the minimum integer for which there exists a $h_g \in G_{\lambda}$ such that:
\[2^m \cdot g = h_g\]
Notice that reasoning as before if there exists an integer $n>0$ and a $h \in G_{\lambda}$ such that $n \cdot g = h$ then $2^m | n$. Let
$x \in \bigoplus_{i=1}^{r_0(G)} \Zmed \oplus \bigoplus_{i=1}^{r_2(G)}\Ztwo$ a solution of the equation $2^m \cdot x = {\tilde{f}}_{\lambda}(h_g)$. We know that any element
$g'\in G_{\lambda}'$ can be written as $ g' = n\cdot g + h$ for some integer $n$ and $h\in G_{\lambda}$ then define the map
${\tilde{F}}_{\lambda}: G_{\lambda}' \to \bigoplus_{i=1}^{r_0(G)} \Zmed \oplus \bigoplus_{i=1}^{r_2(G)}\Ztwo$ as
${\tilde{F}}_{\lambda}(g') = n\cdot x + {\tilde{f}}_{\lambda}(h)$. By construction it is well defined and it is an homomorphism.
It remains to show the injectivity. Suppose that there exists an $n >0$ such that $n\cdot x + {\tilde{f}}_{\lambda}(h) = 0$ with $n = 2^l\cdot p$ and $2^m \not | n$.
Reasoning as in the previous step we would get that or the order of $2^l\cdot g + h$ is less or equal $2^{m-l}$ if $p = 1$ or the order of
$s\cdot h_g -2^{m-r}\cdot g +  2^{m-r-l}\cdot h$ is less or equal $2^{r}$ if $p = 2^r \cdot s-1$. As every element of finite order is in $H$ then
we obtain a contradiction with the minimality of $m$ then ${\tilde{\tilde{f}}}_{\lambda}$ is a monomorphism and we have completed the proof.
\end{proof}
Next result solves the even case.

\begin{Thm}\label{EvenByQuotientSplitsCoarsely} Let $G$ be an even-by-quotient group such that $\dim_{\QQ} (G \otimes \QQ)< \infty$.
If $H$ is a subgroup generated by a maximal free independent system of elements of $G$ then the exact sequence:
\[0\to
H \overset{i}{\rightarrow} G \overset{\pi}{\to} G/H\to 0\]
is coarsely split.
\end{Thm}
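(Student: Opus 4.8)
The plan is to reduce the even case to the finite direct sum case already handled in Corollary \ref{ZmedPowerNSplitsCoarsely}, using the algebraic embedding from Proposition \ref{EvenCanBeEmbedded} as the bridge. Let $n = r_0(G) = \dim_{\QQ}(G\otimes\QQ)$ and $r = r_2(G)$. By Proposition \ref{EvenCanBeEmbedded} there is a monomorphism $f: G \to \bigoplus_{i=1}^{n}\Zmed \oplus \bigoplus_{i=1}^{r}\Ztwo$ sending a chosen maximal independent system to the standard generators, with $H = \langle g_1,\dots,g_n\rangle$ mapping onto $\bigoplus_{i=1}^{n}\ZZ$ sitting diagonally in $\bigoplus_{i=1}^{n}\Zmed$. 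The target group $P := \bigoplus_{i=1}^{n}\Zmed \oplus \bigoplus_{i=1}^{r}\Ztwo$ is $2$-divisible and, crucially, $n$ is finite; the torsion summand $\bigoplus_{i=1}^{r}\Ztwo$ is locally finite.

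First I would fix proper left invariant metrics on all the groups in sight so that the embedding $f$ becomes a coarse embedding. I would put on $P$ the $l_1$-sum of the metric $d_{\Zmed}$ from Lemma \ref{ZmedSplitsCoarsely} on each copy of $\Zmed$ and the ultrametric $d_{\Ztwo}$ on each copy of $\Ztwo$, then pull this back along $f$ to get a proper norm on $G$ (properness is inherited because $f$ is injective and each coordinate norm is proper, so preimages of bounded sets are finite). With respect to these metrics $f$ is an isometric embedding onto its image, so $G$ is coarsely equivalent to the subgroup $f(G) \subset P$, and the subgroup $H$ maps coarsely onto $\bigoplus_{i=1}^{n}\ZZ \subset \bigoplus_{i=1}^{n}\Zmed \subset P$.

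Next I would invoke Corollary \ref{ZmedPowerNSplitsCoarsely} to split the ambient sequence coarsely: $\bigoplus_{i=1}^{n}\Zmed$ is coarsely equivalent to $\bigoplus_{i=1}^{n}\ZZ \oplus \bigoplus_{i=1}^{n}\Ztwo$ via the product map $F$, compatibly with the inclusion of $\bigoplus_{i=1}^{n}\ZZ$ and the projection. Adjoining the extra torsion copies $\bigoplus_{i=1}^{r}\Ztwo$, this exhibits $P$ as coarsely equivalent to $\bigoplus_{i=1}^{n}\ZZ \oplus M$ where $M = \bigoplus_{i=1}^{n+r}\Ztwo$ is locally finite, and under this equivalence the free subgroup $\bigoplus_{i=1}^{n}\ZZ$ is carried to the first factor while the projection to $M$ corresponds to the quotient. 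Composing $f$ with $F$, I would transport the splitting data of the ambient sequence to $G$: the composite sends $H$ (coarsely) to $\bigoplus_{i=1}^{n}\ZZ$ and realizes $G/H$ as the image in $M$, which by Corollary \ref{TorsionAbelian} is coarsely a locally finite group. Matching the bookkeeping $\pi' \circ (F\circ f) \sim \pi$ and $(F\circ f)\circ i \sim \id|_H$ then yields the required coarse equivalence $G \to H \oplus G/H$ in the sense of Definition \ref{CoarselySplit}.

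The main obstacle I anticipate is not the coarse geometry but controlling the interaction between the embedding $f$ and the metrics: one must ensure that the restriction of the pulled-back metric to $G$ makes the composite $F \circ f$ a genuine coarse \emph{equivalence} onto $H\oplus G/H$ rather than merely a coarse embedding into $P$. Since $f(G)$ need not be all of $P$, I would need to check that $f(G)$ is coarsely dense in the relevant summands, or more precisely that the projection of $f(G)$ to $M$ is all of $G/H$ (which holds since $\pi$ is surjective and $f$ respects the quotient) and that the $\ZZ^n$-direction is captured exactly by $H$; the finiteness of $n$ is what keeps the $l_1$-sum metric well behaved and lets the product coarse equivalence $F$ be applied coordinatewise without distortion blowing up. Verifying that the two bounded-distance conditions of Definition \ref{CoarselySplit} survive this composition is the delicate point, but it is a routine propagation of the estimates already established in Lemma \ref{ZmedSplitsCoarsely} and Corollary \ref{ZmedPowerNSplitsCoarsely}.
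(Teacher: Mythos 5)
Your proposal follows essentially the same route as the paper's own proof: embed $G$ into $\bigoplus_{i=1}^{r_0(G)}\Zmed\oplus\bigoplus_{i=1}^{r_2(G)}\Ztwo$ via Proposition \ref{EvenCanBeEmbedded}, coarsely split the ambient sequence by Corollary \ref{ZmedPowerNSplitsCoarsely}, and restrict the resulting coarse equivalence $F$ to $G$. The one point you flag as delicate --- that $F|_G$ lands onto all of $H\oplus G/H$ rather than merely embedding into it (note that surjectivity of the two coordinate projections alone would not suffice) --- is settled in the paper by the observation you already have the ingredients for: an $F$-preimage of $(a,b)$ differs from a lift of $b$ chosen inside $G$ by an element of $H\subset G$, hence itself lies in $G$.
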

\begin{proof}
By Proposition \ref{EvenCanBeEmbedded} we can assume that $G$ is a subgroup of ${\mathbb{G}} := \bigoplus_{i= 1}^{r_0(G)}\Zmed \oplus \bigoplus_{i= 1}^{r_2(G)} \Ztwo$
with $H$ the
subgroup $\bigoplus_{i=1}^{r_0(G)} \ZZ$. As an easy consequence of Corollary \ref{ZmedPowerNSplitsCoarsely} we get that the exact sequence
\[0 \to H \overset{i}{\to} {\mathbb{G}} \overset{\pi}{\to} {\mathbb G}/H \to 0\]
is coarsely split. Let $F: ({\mathbb G}, d_{\mathbb G}) \to (H \oplus {\mathbb G}/H, d_H \oplus d_{{\mathbb G}/H})$ be the coarse equivalence that splits the exact sequence. Then
the restriction $F|_G$ will be a coarse embedding of $G$ into $H \oplus {\mathbb G}/H$. In particular as the projection $\pi: {\mathbb G} \to {\mathbb G}/H$ coincides with
the composition of the projection ${\bar{\pi}}: H \oplus {\mathbb G}/H\to {\mathbb G}/H$ with $F$ we get that $F|_G(G) \subseteq H \oplus G/H$. To prove the converse inclusion
take any $(a, b) \in H\oplus G/H$. Then there exists an element ${\tilde b} \in G$ such that $\pi|_G({\tilde b}) = b$. Now
by the surjectivity of $F$ (see the proof of \ref{ZmedSplitsCoarsely} and
\ref{ZmedPowerNSplitsCoarsely}) there exists a ${\tilde c} \in {\mathbb G}$ such that $F({\tilde c}) = (a, b)$ but it would imply ${\tilde c} - {\tilde b} \in H$ thus $c \in G$
and we have proved $H \oplus G/H \subseteq F|_G(G)$.
\end{proof}
\begin{Problem}Classify even-by-quotient groups of infinite asymptotic dimension up to coarse equivalence.
\end{Problem}
\section{Coarse classification and rigidity}\label{MainSection}
Next theorem is derived form the results of the previous two sections.
\begin{Thm} \label{SuperTheorem}
Let $G$ be an abelian group such that $\dim_{\QQ}(G\otimes \QQ) < \infty$ and let $H$ be a subgroup of $G$ generated by a maximal independent free system of elements of $G$
then the exact sequence
\[0\to
H \overset{i}{\rightarrow} G \overset{\pi}{\to} G/H\to 0\]
is coarsely split. Hence $G$ is coarsely equivalent to $H\oplus G/H$.
\end{Thm}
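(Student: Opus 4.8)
The plan is to reduce the general case to the two special cases---odd pairs and even-by-quotient groups---already handled in Theorems \ref{OddPairsSplitsCoarsely} and \ref{EvenByQuotientSplitsCoarsely}, by factoring the group through an intermediate subgroup chosen so that one quotient is odd and the other is an even power of $2$. Concretely, given $G$ with $r_0(G) = \dim_{\QQ}(G\otimes \QQ) < \infty$ and $H$ generated by a maximal free independent system, I first want to interpolate a subgroup $H \subseteq G_{\text{odd}} \subseteq G$ (or rather a chain of subgroups) so that $(G, G_{\text{odd}})$ is an odd pair and $(G_{\text{odd}}, H)$ is an even-by-quotient situation, or vice versa. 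The torsion subgroup $T$ of $G/H$ decomposes as $T \cong T_2 \oplus T_{\text{odd}}$, where $T_2$ is the $2$-primary part and $T_{\text{odd}}$ is the sum of the $p$-primary parts for odd $p$; this is the standard primary decomposition of an abelian torsion group.

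First I would let $G_2 = \pi^{-1}(T_{\text{odd}})$ be the preimage in $G$ of the odd-torsion part of $G/H$. Then $(G_2, H)$ is an odd pair, since $G_2/H \cong T_{\text{odd}}$ has every element of odd order, so by Theorem \ref{OddPairsSplitsCoarsely} the sequence $0 \to H \to G_2 \to G_2/H \to 0$ is coarsely split and $G_2$ is coarsely equivalent to $H \oplus G_2/H$. On the other hand, $G/G_2 \cong (G/H)/(G_2/H) \cong T_2$ has every element of $2$-power order, so $(G, G_2)$ is an even pair; moreover $G_2$ is itself generated by a maximal free independent system of $G$ together with odd torsion, so $G$ is even-by-quotient relative to $G_2$ in the appropriate sense, and Theorem \ref{EvenByQuotientSplitsCoarsely} applies to split $0 \to G_2 \to G \to G/G_2 \to 0$, giving $G$ coarsely equivalent to $G_2 \oplus G/G_2$. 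I expect to have to check that the finite-torsion-free-rank hypothesis carries through to both sub-sequences, which is immediate since $r_0(H) = r_0(G_2) = r_0(G)$ and the quotients are torsion.

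Combining these, $G$ is coarsely equivalent to $G_2 \oplus G/G_2$, and $G_2$ is coarsely equivalent to $H \oplus G_2/H$, so $G$ is coarsely equivalent to $H \oplus (G_2/H) \oplus (G/G_2)$. Now $G_2/H \cong T_{\text{odd}}$ and $G/G_2 \cong T_2$ are both locally finite, hence so is their direct sum, and by Corollary \ref{TorsionAbelian} any two locally finite groups are coarsely equivalent; in particular $(G_2/H) \oplus (G/G_2)$ is coarsely equivalent to $G/H$ (which is itself locally finite, being the full torsion quotient). Therefore $G$ is coarsely equivalent to $H \oplus G/H$, which is the assertion. To get the stronger statement that the full sequence $0 \to H \to G \to G/H \to 0$ is coarsely split (not merely that $G \simeq H \oplus G/H$), I would track the splitting maps through the composition: the coarse equivalence $G \to G_2 \oplus G/G_2 \to H \oplus (G_2/H) \oplus (G/G_2)$ composed with the coarse equivalence on the locally-finite factors restricts correctly to $H$ and projects correctly onto $G/H$, since each intermediate splitting satisfies the compatibility conditions of Definition \ref{CoarselySplit}.

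The main obstacle I anticipate is verifying that the even-by-quotient machinery applies cleanly with $G_2$ in the role of the base subgroup: Theorem \ref{EvenByQuotientSplitsCoarsely} is stated for $H$ generated by a maximal free independent system, whereas $G_2$ additionally contains odd torsion, so I must confirm either that the odd torsion in $G_2$ does not obstruct the embedding of Proposition \ref{EvenCanBeEmbedded} into a $2$-divisible group, or else reorganize the order of the two splittings (splitting off the even part first). The cleaner route may well be to define $G_{\text{ev}} = \pi^{-1}(T_2)$ so that $(G_{\text{ev}}, H)$ is an even pair handled directly by Theorem \ref{EvenByQuotientSplitsCoarsely}, and $(G, G_{\text{ev}})$ is an odd pair handled by Theorem \ref{OddPairsSplitsCoarsely}; this keeps each splitting theorem applied exactly in the form it was proved, and I would verify that this ordering produces the same final decomposition up to the coarse equivalence of locally finite groups.
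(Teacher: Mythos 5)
Your final ``cleaner route'' is exactly the paper's proof: the paper sets ${\mathbf T}_2 = \pi^{-1}(T_2)$ (the preimage of the $2$-primary part of $G/H$), applies Theorem \ref{EvenByQuotientSplitsCoarsely} to the even-by-quotient pair $({\mathbf T}_2, H)$ and Theorem \ref{OddPairsSplitsCoarsely} to the odd pair $(G, {\mathbf T}_2)$, so your instinct to reorganize the order of the two splittings so that the even-by-quotient theorem is applied with $H$ itself as the base subgroup is the right one. The one place where your write-up falls short of the full claim is the recombination step: identifying $(G_2/H)\oplus(G/G_2)$ with $G/H$ via Corollary \ref{TorsionAbelian} does give the coarse equivalence $G\simeq H\oplus G/H$, but an arbitrary coarse equivalence between two locally finite groups carries no compatibility with $\pi$, so the resulting map $G\to H\oplus G/H$ need not satisfy the condition $\pi'\circ F\sim\pi$ of Definition \ref{CoarselySplit}; your remark that ``each intermediate splitting satisfies the compatibility conditions'' does not cover this step, since the Banakh--Zarichnyi equivalence is not one of the intermediate splittings. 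The paper's fix is to observe that $(G/H, T_2)$ is \emph{also} an odd pair, so Theorem \ref{OddPairsSplitsCoarsely} yields a coarse equivalence $\tilde g: G/H\to T_2\oplus G/{\mathbf T}_2$ that does respect the projections, and then to define $F_2(x)=\tilde g^{-1}(f_2(\tilde f_1(x)), \tilde f_2(x))$; with that substitution your argument matches the paper's.
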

\begin{proof}
Let $T_2$ be the $2$-Torsion subgroup of $G/H$ i.e. the subgroup of $G/H$ such that all the elements has order some power of $2$. Notice that $G/H$ is locally finite. 
We get that the group ${\mathbf T_2}$ defined as ${\mathbf T_2} := \pi^{-1}(T_2)$ is an even-by-quotient group and by theorem
\ref{EvenByQuotientSplitsCoarsely} we have that the exact sequence
\[0\to
H \overset{i}{\rightarrow} \T2  \overset{\pi}{\to} T_2 \to 0\] is coarsely split.
Hence there exists some proper left invariant metrics $d_H$, $d_{\T2}$ and $d_{T_2}$ and a coarse equivalence
$f: (\T2, d_{\T2}) \to (H\oplus T_2, d_H\oplus d_{T_2})$ (notation: $f(x) = (f_1(x), f_2(x))$). Now we have that $(G,\T2)$ and $(G/H, T_2)$
are odd pairs
and by Theorem \ref{OddPairsSplitsCoarsely} the exact sequences:
\[0\to
\T2    \overset{i}{\rightarrow} G  \overset{\pi}{\to} G/\T2   \to 0\]
\[0\to
T_2 \overset{i}{\rightarrow} G/H  \overset{\pi}{\to} G/\T2   \to 0\]
are coarsely split. So there exist two proper left invariant metrics
$d_G$ and $d_{G/\T2}$ and two bi-uniform equivalences
\[{\tilde f}: (G, d_G)\to (\T2\oplus G/\T2, d_{\T2}\oplus d_{G/\T2}) \text{  (notation:  } {\tilde f(x)} = ({\tilde{f}}_1(x), {\tilde f}_2(x))\text{ )}\]
\[{\tilde g} : (G/H, d_{G/H}) \to (T_2 \oplus G/\T2, d_{T_2}\oplus d_{G/\T2})\text{  (notation: }{\tilde g}(x) = ({\tilde g}_1(x), {\tilde g}_2(x))\text{ )}\]
Define the map $F: (G, d_G) \to (H\oplus G/H, d_H \oplus d_{G/H})$  as $F(x) = (F_1(x), F_2(x))$ with $F_1$ and $F_2$ given by:
\[F_1(x) = f_1(\tf_1(x))\]
\[F_2(x) = \tg^{-1}(f_2(\tf_1(x)), \tf_2(x))\]
It is clearly a coarse equivalence and also if $x \in H$ then $F(x) = (x, 0)$. By the construction of $f$, $\tf$ and $\tg$ we obtain that
$F_2(x) = x + H = \pi(x)$ for every $x \in G$. So the original exact sequence is coarsely split.
\end{proof}

As a corollary we get easily the following result of Dranishnikov and Smith about the asymptotic
dimension of $G$.

\begin{Cor}[Dranishnikov-Smith, \cite{Dran-Smith}] Let $G$ be a countable abelian group then $asdim (G) = dim_{\QQ} (G \otimes \QQ)$.
\end{Cor}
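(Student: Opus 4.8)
The plan is to deduce this from the splitting Theorem \ref{SuperTheorem}, handling the finite and infinite rank cases separately. Write $n = \dim_{\QQ}(G \otimes \QQ)$ for the torsion free rank. First suppose $n < \infty$ and let $H$ be a subgroup generated by a maximal independent free system; then $H \cong \ZZ^n$, and since $H$ absorbs the whole torsion free part the quotient $G/H$ is a countable torsion abelian group, hence locally finite. By Theorem \ref{SuperTheorem}, $G$ is coarsely equivalent to $\ZZ^n \oplus G/H$ with the $l_1$ sum metric, so since asymptotic dimension is a coarse invariant it suffices to compute $\asdim(\ZZ^n \oplus G/H)$.

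For the lower bound, the inclusion $\ZZ^n \cong H \incl G$ is a coarse embedding, and asymptotic dimension does not increase under coarse embeddings, so $\asdim(G) \ge \asdim(\ZZ^n) = n$. For the upper bound I would first observe that $\asdim(G/H) = 0$: by Theorem \ref{AsymptoticDimensionCountable} the asymptotic dimension of the locally finite group $G/H$ is the supremum of the asymptotic dimensions of its finitely generated subgroups, all of which are finite and hence $0$-dimensional. The standard product inequality $\asdim(X \oplus Y) \le \asdim(X) + \asdim(Y)$ then gives $\asdim(\ZZ^n \oplus G/H) \le n + 0 = n$, and combining the two bounds yields $\asdim(G) = n$. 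When $n = \infty$, the group $G$ contains a copy of $\ZZ^m$ for every $m$ (spanned by $m$ elements of an independent free system), so the same coarse embedding argument gives $\asdim(G) \ge m$ for all $m$, whence $\asdim(G) = \infty = n$.

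The only non-routine input is the product inequality for the $l_1$ sum metric, which I expect to be the single point requiring care (it is a standard fact in coarse geometry but is not recorded earlier in the paper); the rest is a formal assembly of coarse invariance, monotonicity under coarse embedding, and the previously established splitting. Alternatively, the statement follows even more directly from Theorem \ref{AsymptoticDimensionCountable} on its own, which reduces $\asdim(G)$ to the supremum over finitely generated subgroups $F$; each such $F$ is coarsely equivalent to $\ZZ^{r_0(F)}$ with $r_0(F) \le n$, and the value $n$ is attained, so this route avoids the product inequality altogether.
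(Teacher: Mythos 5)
Your proof is correct and takes essentially the same route as the paper's: the finite-rank case via Theorem \ref{SuperTheorem}, the subadditivity of $\asdim$ under direct sums, and the vanishing of $\asdim$ for locally finite groups (you merely make explicit the lower bound $\asdim(G)\ge n$ and the infinite-rank case, both of which the paper treats in one line). The alternative you sketch via Theorem \ref{AsymptoticDimensionCountable} is also valid and is in fact closer to Dranishnikov--Smith's original argument, but the main proof matches the paper.
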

\begin{proof}
If $\dim_{\QQ}(G\otimes \QQ)$ is finite the result is a direct consequence of the previous theorem, the fact $\asdim (H\oplus G/H) \le \asdim H + \asdim G/H$ and the result 
that says that all locally finite group are of asymptotic dimension zero(see \cite{Smith} and \cite{Brod-Dydak-Higes-Mitra}).
If it is infinite just notice that $\bigoplus_{i=1}^{\infty} \ZZ \subset G$ and $\asdim (\bigoplus_{i=1}^{\infty} \ZZ)= \infty$.
\end{proof}
From theorem \ref{SuperTheorem} we get that all countable abelian groups of finite torsion free rank are coarsely equivalent to a group $\ZZ^n\oplus L$ with 
$L$ a torsion group. So in view of this, corollary \ref{FinitelyGeneratedAreConnected} and corollary \ref{TorsionAbelian} we get that the torsion free rank of a countable group 
and the finitely-non finitely generated property classify up to coarse equivalence abelian groups. So we are interested in finding an algebraic coarse invariant that 
together with the torsion free rank determines if a group is finitely generated or not. A good candidate is the $\QQ$-cohomological dimension. 
To show that we will need the following theorem that is a trivial consequence of the results of \cite{Gil-Str}. 
\begin{Thm}(Gildenhuys-Strebel \cite{Gil-Str}) If $G$ is a countable torsion free abelian group then:
\begin{enumerate}
\item $\dim_{\QQ}(G\otimes \QQ)\le cd_{\QQ}(G) \le \dim_{\QQ}(G\otimes \QQ) +1$.
\item If $cd_{\QQ} (G) = \dim_{\QQ}(G\otimes \QQ)$ then $G$ is finitely generated.
\end{enumerate}
\end{Thm}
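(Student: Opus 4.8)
The plan is to handle the two inequalities in (1) separately and then read off (2) as the statement that the upper bound $n+1$ is attained exactly when $G$ fails to be finitely generated. Throughout put $n = \dim_{\QQ}(G \otimes \QQ)$, the torsion free rank of $G$. For the lower bound $n \le cd_{\QQ}(G)$ I would choose a maximal $\ZZ$-independent subset of $G$; it generates a free abelian subgroup $\ZZ^n \le G$. Since $\QQ[\ZZ^n]$ is a Laurent polynomial ring in $n$ variables, its Koszul resolution gives $cd_{\QQ}(\ZZ^n) = n$ (indeed $\Ext^n_{\QQ[\ZZ^n]}(\QQ, \QQ[\ZZ^n]) \ne 0$). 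Cohomological dimension over a fixed ring is monotone under passage to subgroups, because a projective $\QQ G$-resolution of $\QQ$ restricts to a projective $\QQ[\ZZ^n]$-resolution ($\QQ G$ being $\QQ[\ZZ^n]$-free), so $n = cd_{\QQ}(\ZZ^n) \le cd_{\QQ}(G)$. I note this is also a special case of Theorem \ref{Shalom-Sauer}(1) applied to the inclusion $\ZZ^n \hookrightarrow G$ with the restricted metric.

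For the upper bound $cd_{\QQ}(G) \le n+1$ I would write $G = \bigcup_i G_i$ as a countable strictly increasing union of finitely generated subgroups. Each $G_i$ is finitely generated and torsion free, hence $G_i \cong \ZZ^{n_i}$ with $n_i \le n$, so $cd_{\QQ}(G_i) = n_i \le n$. Because $BG$ is the mapping telescope of the $BG_i$, every $\QQ G$-module $M$ sits in a Milnor short exact sequence
\[
0 \to {\varprojlim_i}^1 H^{q-1}(G_i, M) \to H^q(G, M) \to \varprojlim_i H^q(G_i, M) \to 0 .
\]
For $q > n+1$ both outer terms vanish, since $H^{q}(G_i,M) = H^{q-1}(G_i, M) = 0$ once $q-1 > n$; hence $H^q(G, M) = 0$ for all $M$ and $cd_{\QQ}(G) \le n+1$. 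This establishes (1).

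For (2) I would argue the contrapositive: if $G$ is not finitely generated then $cd_{\QQ}(G) = n+1$. Here I arrange the chain so that every $G_i \cong \ZZ^n$ has full rank $n$ (start from a maximal independent set and adjoin generators of $G$ one at a time); since $G$ is infinitely generated, the indices $[G_{i+1}:G_i]$ are finite but exceed $1$ infinitely often. Taking $q = n+1$, the sequence above collapses to
\[
H^{n+1}(G, M) \cong {\varprojlim_i}^1 H^{n}(G_i, M).
\]
I would use $M = \QQ G$. As each $G_i \cong \ZZ^n$ is a Poincar\'e duality group of dimension $n$, $H^{n}(G_i, \QQ G)$ is the free $\QQ$-module on the coset space $G/G_i$, and the restriction map $H^n(G_{i+1},\QQ G) \to H^n(G_i, \QQ G)$ sends the generator of a $G_{i+1}$-coset to the sum of the generators of the $[G_{i+1}:G_i]$ sub-cosets it contains (the index scalar being a unit in $\QQ$). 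As the indices diverge, this is an inverse system of infinite dimensional $\QQ$-vector spaces whose images do not stabilize, and I would verify that its $\varprojlim^1$ is nonzero; then $H^{n+1}(G, \QQ G) \ne 0$ and $cd_{\QQ}(G) = n+1 > n$.

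The main obstacle is exactly this last step. Over $\QQ$ the trivial module is useless: $H^n(\ZZ^n,\QQ) = \QQ$ is finite dimensional and the transition maps are multiplication by the invertible indices, so the system is Mittag--Leffler and $\varprojlim^1$ vanishes; one must pass to an infinite dimensional coefficient module such as $\QQ G$ to detect the failure of finite generation. Even then, non-stabilization of images does not formally force $\varprojlim^1 \ne 0$ for a countable system, so I would make the coset system fully explicit (as in the rank one model $G = \ZZ[1/2] = \bigcup_i 2^{-i}\ZZ$, where this reproduces $cd_{\QQ} = 2$) and check non-vanishing directly. An alternative route that avoids the explicit $\varprojlim^1$ is to combine $hd_{\QQ}(G) = n$ with the inequality $cd_{\QQ}(G) \le hd_{\QQ}(G) + 1$ valid for countable groups, and then observe that the equality $cd_{\QQ}(G) = hd_{\QQ}(G)$ forces $\QQ$ to admit a finite resolution by finitely generated projective $\QQ G$-modules, i.e. $G$ to be of type $FP$; for a finite rank abelian group this is equivalent to finite generation.
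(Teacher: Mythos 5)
The paper does not prove this statement at all --- it is imported as a black box from Gildenhuys--Strebel (the text says only that it is ``a trivial consequence of the results of \cite{Gil-Str}'') --- so there is no in-paper argument to compare yours against; I am judging the proposal on its own. Part (1) as you give it is complete and correct: the lower bound by restricting a projective $\QQ G$-resolution to a full-rank subgroup $\ZZ^n\le G$ (equivalently via Theorem \ref{Shalom-Sauer}), and the upper bound via the Milnor sequence for a countable ascending union of subgroups $G_i\cong\ZZ^{n_i}$ with $n_i\le n$. This is in substance the standard proof (Bieri's inequality $cd_RG\le hd_RG+1$ for countable $G$), and it is surely close to what Gildenhuys--Strebel do.

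For part (2) your architecture is right and you have correctly isolated the only real point, but as written there is a gap: everything reduces to $\varprojlim^1 H^n(G_i,\QQ G)\ne 0$ and you stop at ``the images do not stabilize.'' The gap is genuinely closable along the lines you indicate. The identification $H^n(G_i,\QQ G)\cong \QQ[G/G_i]$ with refinement-sum transition maps is correct, the image of $\QQ[G/G_j]\to\QQ[G/G_i]$ strictly decreases each time $[G_{j}:G_{j-1}]>1$, so the system fails Mittag--Leffler; and since each $\QQ[G/G_i]$ is a \emph{countable} abelian group, the dichotomy for inverse sequences of countable abelian groups (Gray/McGibbon: $\varprojlim^1$ is zero iff the system is Mittag--Leffler, and is uncountable otherwise) gives $\varprojlim^1\ne0$ at once --- so your stated worry that non-stabilization ``does not formally force'' non-vanishing is unfounded in this countable setting, and you should invoke that dichotomy rather than an ad hoc computation in the rank-one model. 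Two smaller points: (i) you should justify that the chain can be chosen with every $G_i$ of full rank $n$ (any finitely generated subgroup containing a maximal independent set is torsion free of rank exactly $n$, hence $\cong\ZZ^n$, and contains $G_0$ with finite index --- this is where torsion-freeness of $G$ is used); (ii) your alternative route via $cd_{\QQ}=hd_{\QQ}\Rightarrow$ type $FP\Rightarrow$ finitely generated is true but each implication is itself a theorem of comparable depth to the statement being proved, so as written it is a citation rather than a proof.
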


Combining this result, the Shalom-Sauer theorem \ref{Shalom-Sauer} and our results we get:
\begin{Cor}\label{CohomologicalDimensionAbelian} Let $G$ be a countable abelian group such that $\dim_{\QQ}(G\otimes \QQ)< \infty$ . 
Then:
\begin{enumerate}
\item $G$ is finitely generated if and only if $cd_{\QQ}(G) = \dim_{\QQ}(G\otimes \QQ)$.     
\item If $G$ is not finitely generated then $cd_{\QQ}(G) = \dim_{\QQ}(G\otimes\QQ) +1$.
\end{enumerate}
\end{Cor}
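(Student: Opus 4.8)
The plan is to transport the whole question across a coarse equivalence into a setting where the Gildenhuys--Strebel theorem applies. Write $n=\dim_{\QQ}(G\otimes\QQ)$ and let $H$ be a subgroup generated by a maximal independent free system, so $H\cong\ZZ^n$ and $L:=G/H$ is a locally finite (torsion) group; by Theorem \ref{SuperTheorem}, $G$ is coarsely equivalent to $\ZZ^n\oplus L$. The first thing I would record is that $G$ is finitely generated if and only if $L$ is finite: $H$ is finitely generated, so finiteness of $L$ makes $G$ finitely generated, while conversely a finitely generated torsion quotient must be finite. Thus the two clauses split exactly along the dichotomy ``$L$ finite'' versus ``$L$ infinite''.

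The key mechanism is that $cd_{\QQ}$ is a coarse invariant within the class of countable abelian groups: abelian groups are amenable, so Theorem \ref{Shalom-Sauer}(2) (with $R=\QQ$) applied to the two coarse embeddings underlying a coarse equivalence gives $cd_{\QQ}(A)=cd_{\QQ}(B)$ for coarsely equivalent countable abelian $A,B$; in particular $cd_{\QQ}(G)=cd_{\QQ}(\ZZ^n\oplus L)$. For the finitely generated case I would not even need this: by the structure theorem $G\cong\ZZ^n\oplus F$ with $F$ finite, and then $cd_{\QQ}(G)=n$, the lower bound coming from the subgroup $\ZZ^n$ and the upper bound from subadditivity of $cd_{\QQ}$ on direct sums together with $cd_{\QQ}(F)=0$ (Maschke, since $\abs{F}$ is invertible in $\QQ$). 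This proves the forward implication of (1), and shows that the reverse implication of (1) is just the contrapositive of (2).

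For clause (2) with $n\ge 1$ I would manufacture a torsion-free model. Since $L$ is infinite and locally finite it is coarsely equivalent to $\Ztwo$ by Corollary \ref{TorsionAbelian}, hence $\ZZ^n\oplus L$ is coarsely equivalent to $\ZZ^n\oplus\Ztwo$; peeling off one copy of $\ZZ$ and using Lemma \ref{ZmedSplitsCoarsely} (which says $\Zmed$ is coarsely equivalent to $\ZZ\oplus\Ztwo$) shows $G$ is coarsely equivalent to $G':=\ZZ^{n-1}\oplus\Zmed$. This $G'$ is countable, torsion-free, of torsion free rank $n$, and not finitely generated, so the Gildenhuys--Strebel theorem \cite{Gil-Str} applies: its clause (2) forbids $cd_{\QQ}(G')=n$, while clause (1) confines $cd_{\QQ}(G')$ to $\{n,n+1\}$, whence $cd_{\QQ}(G')=n+1$. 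Coarse invariance then yields $cd_{\QQ}(G)=cd_{\QQ}(G')=n+1$.

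The hard part is the pure-torsion case $n=0$, where $G$ itself is infinite and locally finite and the torsion-free reduction is impossible: an infinite locally finite group has asymptotic dimension $0$, whereas any nontrivial torsion-free abelian group contains $\ZZ$ and so cannot be coarsely equivalent to it. Here Gildenhuys--Strebel is unavailable and I would compute $cd_{\QQ}(L)$ by hand, showing it equals $1$. The lower bound $cd_{\QQ}(L)\ge 1$ holds because $L$ is infinite, so the trivial module $\QQ$ is not projective over $\QQ L$ (a splitting of the augmentation would produce a nonzero finitely supported $L$-invariant element, which is impossible on an infinite group). The upper bound $cd_{\QQ}(L)\le 1$ follows by writing $L=\bigcup_i L_i$ as a countable increasing union of finite subgroups, each with $cd_{\QQ}(L_i)=0$, and invoking the standard fact that a countable directed union raises cohomological dimension by at most one (the $\varprojlim^1$ contribution). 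This gives $cd_{\QQ}(G)=cd_{\QQ}(L)=1=n+1$, completing clause (2), and with the second paragraph completing clause (1) as well.
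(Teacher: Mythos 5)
Your argument is correct, and it is essentially the argument the paper intends: the paper offers no written proof, only the remark that the corollary follows by ``combining'' Theorem \ref{SuperTheorem}, the Shalom--Sauer theorem \ref{Shalom-Sauer} and Gildenhuys--Strebel, and your second and third paragraphs are precisely that combination made explicit (coarse model $\ZZ^n\oplus L$, replacement by the torsion-free model $\ZZ^{n-1}\oplus\Zmed$ via Corollary \ref{TorsionAbelian} and Lemma \ref{ZmedSplitsCoarsely}, Gildenhuys--Strebel on the model, transfer back by amenability and Theorem \ref{Shalom-Sauer}(2)). Where you go beyond the paper is the case $n=0$: as you correctly observe, an infinite locally finite group admits no torsion-free coarse model, so the Gildenhuys--Strebel theorem as quoted (which assumes torsion-freeness) cannot be invoked, and the paper's one-line derivation silently skips this case. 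Your direct computation that $cd_{\QQ}(L)=1$ for $L$ infinite locally finite --- non-projectivity of the trivial module over $\QQ L$ for the lower bound, and the standard ``countable ascending union raises $cd$ by at most one'' for the upper bound --- is a genuine and necessary supplement, and both halves of it are sound. The only mild caveat is that your reduction $\ZZ^n\oplus\Ztwo\sim\ZZ^{n-1}\oplus\Zmed$ tacitly uses that coarse equivalence is preserved under direct sum with a fixed factor, but this is the same product-map argument the paper itself uses in Corollary \ref{ZmedPowerNSplitsCoarsely}, so it is not a gap.
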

\begin{Rem}As said in the introduction from the Shalom-Sauer theorem and our results to estimate the $R$-(co)homological dimension of countable abelian groups 
it will be enough for certain rings to calculate the dimension of $\ZZ^n \oplus \bigoplus_{i=1}^{\infty} \ZZ_2$.
\end{Rem}
Finally we obtain the two main theorems of this paper:

\begin{Thm}\label{MainTheorem}
Let $G$ and $H$ be two countable abelian groups. Then we have: 
\begin{enumerate}
\item If $\dim_{\QQ}(G\otimes \QQ) < \infty$ then $G$ is coarsely equivalent to $H$ if and only if 
$\dim_{\QQ}(G\otimes \QQ) = \dim_{\QQ}(H\otimes \QQ)$ and $cd_{\QQ}(G) = cd_{\QQ}(H)$.
\item If $\dim_{\QQ}(G\otimes \QQ) = \infty$ then $G$ is coarsely equivalent to $A \oplus K$ with $K$ some(any) locally finite group and $A$ a subgroup of
$\bigoplus_{i=1}^{\infty} \Zmed \oplus \bigoplus_{i=1}^{r_2(G)}\Ztwo$.
\item If $H$ is finitely generated but $G$ is not finitely generated then there is a coarse embedding from $G$ to $H$ if and only if
 $\dim_{\QQ}(G\otimes \QQ) < \dim_{\QQ}(H\otimes \QQ)$.
\end{enumerate}
\end{Thm}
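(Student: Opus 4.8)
The plan is to reduce all three statements to the structure theorem \ref{SuperTheorem}, which in the finite-rank case presents a countable abelian $G$ coarsely as $\ZZ^{n}\oplus M$ with $n=\dim_{\QQ}(G\otimes\QQ)$ and $M=G/H$ locally finite, together with three ``reading'' tools: the equality $\asdim=\dim_{\QQ}(\,\cdot\,\otimes\QQ)$ and the coarse invariance of $\asdim$; Corollary \ref{CohomologicalDimensionAbelian}, by which $cd_{\QQ}$ detects whether $G$ is finitely generated; and Corollary \ref{TorsionAbelian}, by which any two infinite locally finite groups are coarsely equivalent. Since abelian groups are amenable, I may invoke the amenable case of the Shalom--Sauer Theorem \ref{Shalom-Sauer} with $R=\QQ$ freely in both directions.

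For (1), the ``only if'' direction is immediate: a coarse equivalence $G\sim H$ preserves $\asdim$, hence $\dim_{\QQ}(G\otimes\QQ)=\dim_{\QQ}(H\otimes\QQ)$, and yields coarse embeddings both ways, so Theorem \ref{Shalom-Sauer} gives $cd_{\QQ}(G)\le cd_{\QQ}(H)$ and the reverse, i.e. equality. For ``if'', put $n=\dim_{\QQ}(G\otimes\QQ)=\dim_{\QQ}(H\otimes\QQ)$; by \ref{SuperTheorem} both $G$ and $H$ are coarsely $\ZZ^{n}\oplus(\text{locally finite})$. By \ref{CohomologicalDimensionAbelian} the common value of $cd_{\QQ}$ forces $G$ and $H$ to be simultaneously finitely generated (then both coarsely $\ZZ^{n}$) or simultaneously not; in the latter case, were a locally finite factor finite, the corresponding group would be coarsely $\ZZ^{n}$ and so finitely generated by \ref{FinitelyGeneratedAreConnected}, a contradiction, so both factors are infinite and coarsely equivalent by \ref{TorsionAbelian}. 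In either case $G\sim H$ after taking the product of the identity on $\ZZ^{n}$ with the equivalence of the torsion factors.

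Part (2) is the infinite-rank case, where \ref{SuperTheorem} and the even-splitting theorem no longer apply, so I would split only the odd part. Writing $T_2$ for the $2$-primary part of the (locally finite) torsion quotient $G/H$ and $\T2=\pi^{-1}(T_2)$, the pair $(G,\T2)$ is odd, so Theorem \ref{OddPairsSplitsCoarsely} gives $G\sim \T2\oplus G/\T2$ with $G/\T2$ odd torsion, hence locally finite; this is the factor $K$. Since $\T2$ is even-by-quotient (with $H$ still a maximal free independent system), Proposition \ref{EvenCanBeEmbedded}, now with $r_0=\infty$, embeds it isomorphically into $\mathbb{G}=\bigoplus_{i=1}^{\infty}\Zmed\oplus\bigoplus_{i=1}^{r_2(G)}\Ztwo$, and the restriction of a proper left invariant metric of $\mathbb{G}$ realizes the coarse type of $\T2$ as that of a subgroup $A\le\mathbb{G}$. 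Thus $G\sim A\oplus K$.

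For (3) the ``only if'' direction again runs through the invariants: a coarse embedding $G\incl H$ gives $\asdim(G)\le\asdim(H)=\dim_{\QQ}(H\otimes\QQ)$, so $\dim_{\QQ}(G\otimes\QQ)$ is finite, and if equality $\dim_{\QQ}(G\otimes\QQ)=\dim_{\QQ}(H\otimes\QQ)$ held, then $G$ not finitely generated would force $cd_{\QQ}(G)=\dim_{\QQ}(G\otimes\QQ)+1>\dim_{\QQ}(H\otimes\QQ)=cd_{\QQ}(H)$ by \ref{CohomologicalDimensionAbelian}, contradicting Theorem \ref{Shalom-Sauer}; hence the inequality is strict. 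The ``if'' direction is where the real work lies: writing $G\sim\ZZ^{n_G}\oplus M_G$ with $M_G$ locally finite and $H\sim\ZZ^{n_H}$, $n_G<n_H$, it suffices to coarsely embed $M_G$ into $\ZZ$, for then the product map embeds $\ZZ^{n_G}\oplus M_G$ into $\ZZ^{n_G+1}\le\ZZ^{n_H}$. I expect this embedding to be the main obstacle. I would realize $M_G\sim\bigoplus_{i=1}^{\infty}\ZZ_2$ with an ultrametric from a one-step chain $G_0<G_1<\cdots$ and place the nested cosets into pairwise disjoint integer intervals with gaps increasing to infinity (an anti-Cantor arrangement): points in a common coset of $G_j$ land within a bounded interval, making the map coarse, while the two cosets of $G_{j-1}$ inside a coset of $G_j$ are separated by a gap tending to $\infty$ with $j$, making the inverse coarse. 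This produces the required coarse embedding $G\incl H$ and completes the classification.
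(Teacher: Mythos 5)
Your proposal is correct and follows essentially the same route as the paper: Theorem \ref{SuperTheorem} plus Corollaries \ref{CohomologicalDimensionAbelian}, \ref{FinitelyGeneratedAreConnected} and \ref{TorsionAbelian} for (1), the odd-pair splitting together with Proposition \ref{EvenCanBeEmbedded} for (2), and the Shalom--Sauer theorem for the ``only if'' of (3). The only real divergence is that in (3) you construct the coarse embedding of a locally finite group into $\ZZ$ directly via the interval/anti-Cantor arrangement, whereas the paper simply cites this fact from \cite{Brod-Dydak-Higes-Mitra}; your sketch of that construction is sound.
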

\begin{proof}
(1) is a consequence of the previous discussion. \par To prove (2) we do an analogous reasoning as in the proof of \ref{SuperTheorem}. Assume $G$ is a group 
of infinite torsion free rank and let $A$ be the maximal subgroup of $G$ generated by a maximal independent free system of elements of $G$. Let $T_2$ be 
the $2$-torsion group of $G/A$. We have that $\pi^{-1}(T_2) = {\mathbf T_2}$ is even-by-quotient and that $(G, {\mathbf T_2})$ is an odd-pair. 
Applying theorem \ref{OddPairsSplitsCoarsely} and proposition \ref{EvenCanBeEmbedded} we get easily (2).\par
In \cite{Brod-Dydak-Higes-Mitra} it was proved that any locally finite group can be embedded coarsely in $\ZZ$. Now if 
$\dim_{\QQ}(G\otimes \QQ) < \dim_{\QQ}(H\otimes \QQ)$ then $G$ is coarsely equivalent to $\ZZ^n\oplus L$, $H$ is coarsely equivalent to $\ZZ^m$ and $m>n$. So 
it is clear 
that there is a corsely embedding of $\ZZ^n\oplus L$ in $\ZZ^m$. The converse is a direct consequence of the Shalom-Suaer theorem and corollary \ref{CohomologicalDimensionAbelian}
   
\end{proof} 

\begin{Thm}\label{Rigidity} If $G$ is a countable group coarsely equivalent to an abelian group then $G$ 
is locally virtually abelian.
\end{Thm}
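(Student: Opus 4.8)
The plan is to reduce the statement, finitely generated subgroup by finitely generated subgroup, to Gromov's polynomial growth theorem together with the numerical formulae for the growth and the asymptotic dimension of nilpotent groups. First I would fix a proper left invariant metric $d_G$ on $G$ and a coarse equivalence $f\colon (G,d_G)\to (A,d_A)$, where $A$ is the given abelian group; as in the introduction I assume $A$ has finite torsion free rank $n$, so that $\asdim A=\dim_{\QQ}(A\otimes\QQ)=n$ and, by Theorem \ref{SuperTheorem} and Corollary \ref{TorsionAbelian}, $A$ is coarsely equivalent to $\ZZ^n\oplus L$ with $L$ locally finite. Since a subgroup of a virtually abelian group is again virtually abelian, it suffices to show that every finitely generated subgroup $F\le G$ is contained in a finitely generated virtually abelian subgroup.

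The main step is to replace $F$ by a large scale connected component. Writing $F=\langle x_1,\dots,x_k\rangle$ and choosing $s>\max_i\|x_i\|_G$, each $x_i$ lies in the $s$-ball about $1_G$, so $F$ is contained in the $s$-scale connected component $G_s$ of $G$ at $1_G$, which by Lemma \ref{ComponentsInCountableGroups} is a finitely generated subgroup. The subgroups $G_s$ increase with $s$ and are cofinal among the finitely generated subgroups, so by Theorem \ref{AsymptoticDimensionCountable} one has $\sup_s\asdim G_s=\asdim G=n$; as the values $\asdim G_s$ form a nondecreasing sequence of integers bounded by $n$, they equal $n$ for all large $s$. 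Enlarging $s$ if necessary I may therefore assume both $F\subseteq G_s$ and $\asdim G_s=n$.

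Next I would bound the growth of $G_s$. The restriction $f|_{G_s}$ is a coarse embedding of $G_s$ into $A$, so by part (2) of Proposition \ref{GeneralPropertiesGrowth} the growth type of $G_s$ is at most that of $A$. Every finitely generated subgroup of $A$ is a finitely generated abelian group of torsion free rank at most $n$, hence of the form $\ZZ^{\ell}\oplus(\text{finite})$ with $\ell\le n$ and of polynomial growth of degree at most $n$; by Theorem \ref{GrowthOfSubgroups} the growth type of $A$, and therefore that of $G_s$, is at most polynomial of degree $n$. Since $G_s$ is finitely generated, part (3) of Proposition \ref{GeneralPropertiesGrowth} identifies this with its classical growth, so $G_s$ has polynomial growth of degree at most $n$.

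Finally I would run a pinching argument. By Gromov's polynomial growth theorem \cite{Gromov2} the group $G_s$ is virtually nilpotent; let $N\le G_s$ be a finite index nilpotent subgroup whose lower central quotients have torsion free ranks $d_1,d_2,\dots$. By Bass's formula \cite{Bass} the growth degree of $N$ is $\sum_j j\cdot d_j$, while by \cite{Dran-Smith} one has $\asdim N=\sum_j d_j$; both quantities are preserved under passage to the finite index subgroup $N$, so $\sum_j d_j=\asdim G_s=n$ and $\sum_j j\cdot d_j\le n$. Since $\sum_j j\cdot d_j\ge\sum_j d_j$ always, these force $\sum_j (j-1)d_j=0$, i.e.\ $d_j=0$ for $j\ge 2$, so $N$ is abelian and $G_s$ is virtually abelian (indeed virtually $\ZZ^n$, in agreement with Theorem \ref{QuasiIsometryRigidity}). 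Hence $F$ is virtually abelian, and as $F$ was arbitrary, $G$ is locally virtually abelian. I expect the genuine obstacle to be the growth estimate of the third paragraph: the lower bound $\asdim G_s=n$ comes essentially for free from Theorem \ref{AsymptoticDimensionCountable}, but to make Bass's formula bite one must also bound the growth degree of $G_s$ by the \emph{same} $n$, and this uses both the structural description $A\simeq\ZZ^n\oplus L$ and the fact that the locally finite part contributes only flat growth. Matching these two bounds is exactly what upgrades ``virtually nilpotent'' to ``virtually abelian''.
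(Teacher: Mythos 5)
Your argument is correct in substance, but it follows a genuinely different route from the paper. Both proofs begin the same way, reducing to finitely generated subgroups via the $s$-scale components of Lemma \ref{ComponentsInCountableGroups}. From there the paper uses its structure theorem to replace the target by $\ZZ^n\oplus\bigoplus_{i=1}^{\infty}\ZZ_{p_i}$, enlarges the given subgroup $H$ to a finitely generated $H'$ containing $g(\ZZ^n\oplus\bigoplus_{i=1}^{m}\ZZ_{p_i})$ so that $f(H')$ is coarsely \emph{dense} in some $\ZZ^n\oplus\bigoplus_{i=1}^{m'}\ZZ_{p_i}$, concludes that $f|_{H'}$ is a coarse \emph{equivalence} onto a group quasi-isometric to $\ZZ^n$, and then invokes the quasi-isometric rigidity of $\ZZ^n$ (Theorem \ref{QuasiIsometryRigidity}) as a black box. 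You never need surjectivity: a coarse \emph{embedding} into $A$ suffices to bound the growth of $G_s$ by degree $n$ (via Proposition \ref{GeneralPropertiesGrowth} and Theorem \ref{GrowthOfSubgroups}), and you recover the matching lower bound from $\asdim G_s=n$; the pinch between Bass's formula $\sum_j j\,d_j$ and the Dranishnikov--Smith formula $\sum_j d_j$ then does the work of the rigidity theorem. What your approach buys is independence from Theorems \ref{SuperTheorem} and \ref{MainTheorem} and from the coarse-density bookkeeping with the ultrametric on the locally finite factor; what it costs is that you are essentially re-deriving the relevant case of Theorem \ref{QuasiIsometryRigidity} from the polynomial growth theorem, whereas the paper simply quotes it. The paper's enlargement trick is also worth noting as the replacement for your cofinality argument: it is what turns an embedding into an equivalence.

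One small repair is needed at the very end: $d_j=0$ for $j\ge 2$ does not make $N$ abelian, only finite-by-abelian --- it says each lower central quotient $G_j/G_{j+1}$ with $j\ge 2$ is a finitely generated abelian group of torsion free rank zero, hence finite, so $[N,N]$ is finite (consider $N=\ZZ^n\times Q_8$). That still yields the conclusion, since a finitely generated group with finite commutator subgroup is virtually abelian; alternatively, choose $N$ torsion-free from the outset (always possible for a finitely generated virtually nilpotent group), in which case $[N,N]$ finite forces $[N,N]=1$ and $N$ is genuinely abelian.
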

\begin{proof}
By theorem \ref{SuperTheorem} and theorem \ref{MainTheorem} we can assume that $G$ is coarsely equivalent to $\ZZ^n \oplus \bigoplus_{i=1}^K \ZZ_{p_i}$. If $K < \infty$ 
the result is well known as mentioned in the introduction. Let us assume that $K = \infty$. Let $f: G\to \ZZ^n \oplus \bigoplus_{i=1}^{\infty} \ZZ_{p_i}$ 
be a coarse equivalence with 
$g: \ZZ^n \oplus \bigoplus_{i=1}^{\infty} \ZZ_{p_i} \to G$ its coarse inverse. Let $H$ be a finitely generated subgroup of $G$. As a consequence of 
Corollary \ref{FinitelyGeneratedAreConnected} 
we get that there exists an $m \in \NN$ such that $f(H) \subset \ZZ^n \oplus \bigoplus_{i=1}^m \ZZ_{p_i}$. 
Define $H'$ as the minimum subgroup finitely generated that contains 
$H$ and $g(\ZZ^n \oplus \bigoplus_{i=1}^m \ZZ_{p_i})$, by lemma \ref{ComponentsInCountableGroups} such group exists. 
Applying again corollary \ref{FinitelyGeneratedAreConnected}
we obtain that there exists an $m'\ge m$ such that $f(H') \subset \ZZ^n \oplus \bigoplus_{i=1}^{m'} \ZZ_{p_i}$. 
Now as $f$ is a coarse equivalence $f|_{H'}$ is a coarse embedding. 
It is obvious that there exists a constant $C >0$ such that $\ZZ^n \oplus \bigoplus_{i=1}^{m'} \ZZ_{p_i} \subset B_C(\ZZ^n \oplus \bigoplus_{i=1}^m \ZZ_{p_i})$. 
As $g$ is the inverse of $f$ it is also clear that there exists $C'>0$ such that 
$\ZZ^n \oplus \bigoplus_{i=1}^m \ZZ_{p_i} \subset B_{C'}(f(g(\ZZ^n \oplus \bigoplus_{i=1}^m \ZZ_{p_i})))$. Therefore the map 
$f|_{H'}: H' \to \ZZ^n \oplus \bigoplus_{i=1}^{m'} \ZZ_{p_i}$ is a coarse equivalence and $H'$ is virtually abelian. 
It is not hard to see that any subgroup of a virtually abelian 
group is virtually abelian. It implies that $H$ is virtally abelian.  
\end{proof}
\begin{Problem}\label{RigidityProblem} Is any locally virtually abelian group of finite asymptotic dimension coarsely equivalent to an abelian group?
\end{Problem}

\end{document}